\newtheorem{theorem}{Theorem}[section]
\newtheorem{proposition} [theorem]{Proposition}
\newtheorem{corollary} [theorem]{Corollary}
 \newtheorem{lemma} [theorem]{Lemma}
\theoremstyle{definition}
 \newtheorem{example}[theorem]{Example}
\renewcommand\leq{\leqslant} 
\renewcommand\geq{\geqslant}
\renewcommand\mod{\bmod} 
\newcommand{\wt}[1]{|#1|}
\DeclareMathOperator{\fix}{fix}
\DeclareMathOperator{\Aut}{Aut}
\title[Locally triangular graphs and normal quotients of the $n$-cube]{Locally triangular graphs and normal \\quotients of the $n$-cube}
\author{Joanna B. Fawcett}
\address{
Centre for the Mathematics of Symmetry and Computation, The University of Western Australia, 35 Stirling Highway, Crawley, WA 6009, Australia.\newline
Email: \texttt{joanna.fawcett@uwa.edu.au}
}
\thanks{The author was supported by the Australian Research Council  Discovery Project grant DP130100106.}
\begin{document}

\begin{abstract}
 For an integer $n\geq 2$, the triangular graph has  vertex set the $2$-subsets of $\{1,\ldots,n\}$ and edge set the pairs of $2$-subsets intersecting at one point. Such graphs are known to be halved graphs of bipartite rectagraphs, which are connected triangle-free graphs in which every $2$-path lies in a unique quadrangle.   We refine this result and provide a characterisation of connected locally triangular graphs as halved graphs of normal quotients of $n$-cubes.  To do so, we study a parameter that generalises the concept of minimum distance for a binary linear code to arbitrary automorphism groups of the $n$-cube.
 \end{abstract}

\maketitle

\vspace{-0.6cm}

\section{Introduction}

For an integer $n\geq 2$, the \textit{triangular graph} $T_n$ has vertex set the $2$-subsets of $\{1,\ldots,n\}$ and edge set the pairs of $2$-subsets intersecting at one point. A finite simple undirected graph $\Gamma$ is \textit{locally triangular} (respectively \textit{locally $T_n$})  if for every vertex $u\in V\Gamma$, the graph induced by the neighbourhood  $\Gamma(u)$  is isomorphic to a triangular graph (respectively  $T_n$). 

 Various classifications of locally triangular graphs with symmetry exist, including those that are strongly regular \cite{Mak2001},  1-homogeneous    \cite[Theorem 4.4]{JurKoo2003}, and locally rank 3  \cite{BamDevFawPra2015}. These characterisations only admit a few families of graphs, but there are many more examples of locally triangular graphs; for instance, any connected component of the distance 2 graph of a coset graph of a binary linear code  with minimum distance at least $7$ is locally triangular.

Locally triangular graphs are closely related to  \textit{rectagraphs}, which are connected triangle-free graphs with the property that every path of length $2$ lies in a unique quadrangle. Specifically,   a halved graph of a bipartite rectagraph with $c_3=3$ is locally triangular, and conversely,  every connected locally triangular graph is a halved graph of a bipartite rectagraph  with   $c_3=3$ by \cite[Proposition 4.3.9]{BroCohNeu1989}. 
Rectagraphs were first named by Perkel \cite{Per1977} and have been studied by various authors including \cite{Neu1982,Bro2006,BamDevFawPra2015}. Bipartite
rectagraphs also have links to geometry, for such graphs are the incidence graphs of
semibiplanes.

A large family of rectagraphs are quotients of the $n$-cube $Q_n$ by  \cite[Proposition 4.3.6]{BroCohNeu1989}. For a graph $ \Gamma$ and a partition $\mathcal{B}$ of $V \Gamma$, the \textit{quotient graph} $ \Gamma_\mathcal{B}$ is  the simple graph with vertex set $\mathcal{B}$, where distinct $B_1$ and $B_2$ in $\mathcal{B}$ are adjacent whenever there exist $x_1\in B_1$ and $x_2\in B_2$ such that $x_1$ and $x_2$ are adjacent in $ \Gamma$. If $\mathcal{B}$ is the set of orbits of $K\leq \Aut( \Gamma)$, then  $ \Gamma_{\mathcal{B}}$ is a \textit{normal quotient} of $ \Gamma$, and we write $ \Gamma_K$ for $ \Gamma_{\mathcal{B}}$. Normal quotients are particularly well-behaved, for we retain some control of their automorphism groups, valencies and  local actions.

Our first theorem improves the known characterisation of connected locally triangular graphs as halved graphs of bipartite rectagraphs. Note that every connected locally triangular graph is locally $T_n$ for some $n$ by  \cite[Proposition 4.3.9]{BroCohNeu1989}, so it suffices to consider connected locally $T_n$ graphs. Precise definitions for terms in the following results, including the parameter $d_K$, are given in \S \ref{s: prelim} and \S\ref{s: par}.

\begin{theorem}
\label{main even}
Let $\Gamma$ be a graph. Let $n\geq 2$.
The following are equivalent.
\begin{itemize}
\item[(i)] $\Gamma$ is  a connected locally $T_n$ graph.
\item[(ii)] $\Gamma$ is a halved graph of  $(Q_n)_K$ for some  $K\leq \Aut(Q_n)$ such that $K$ is even and  $d_K\geq 7$.
\end{itemize}
Moreover, the group $K$ in (ii) is  unique up to conjugacy in $\Aut(Q_n)$. 
\end{theorem}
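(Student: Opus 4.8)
The plan is to obtain the theorem from the correspondence behind \cite[Proposition 4.3.9]{BroCohNeu1989} --- connected locally triangular graphs are precisely the halved graphs of bipartite rectagraphs with $c_3=3$ --- together with a covering-space description of such rectagraphs and the properties of $d_K$ established in \S\ref{s: par}.

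For (ii)$\Rightarrow$(i), suppose $K\leq\Aut(Q_n)$ is even with $d_K\geq 7$. Then $K$ preserves the parity bipartition of $Q_n$, so $(Q_n)_K$ is a connected bipartite graph, and hence its two halved graphs are connected. Any non-identity element of $K$ fixing a vertex would move a neighbour of it to distance $2<7$, so $K$ is semiregular and $(Q_n)_K$ has valency $n$; and by \S\ref{s: par} the hypothesis $d_K\geq 7$ forces $(Q_n)_K$ to be a rectagraph with $c_3=3$. Then \cite[Proposition 4.3.9]{BroCohNeu1989} gives that each halved graph of $(Q_n)_K$ is locally triangular, and since $(Q_n)_K$ has valency $n$ these local graphs have $\binom{n}{2}$ vertices, hence are copies of $T_n$.

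For (i)$\Rightarrow$(ii), let $\Gamma$ be a connected locally $T_n$ graph. By \cite[Proposition 4.3.9]{BroCohNeu1989} there is a bipartite rectagraph $\Delta$ with $c_3=3$ of which $\Gamma$ is a halved graph, and since $c_2=2$ in a rectagraph the local structure of $\Gamma$ forces $\Delta$ to have valency $n$. The radius-$2$ balls of $\Delta$ coincide with those of $Q_n$, so lifting paths produces a covering $\phi\colon Q_n\to\Delta$ (cf.\ \cite[Proposition 4.3.6]{BroCohNeu1989}); it is well defined because two geodesics of $Q_n$ with the same endpoints differ by a sequence of quadrangle interchanges, each mapping to a quadrangle of $\Delta$, and in particular $\phi$ sends $4$-cycles to $4$-cycles. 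The key point is then that the quadrangle $2$-complex of $Q_n$, namely the $2$-skeleton of the solid $n$-cube, is simply connected, so $Q_n$ is the universal cover of $\Delta$ among coverings under which quadrangles lift to $4$-cycles; such a cover is regular, so its deck group $K$ lies in $\Aut(Q_n)$ and $(Q_n)_K\cong\Delta$. Finally $\Delta$ is connected and bipartite, so $\phi$ maps each parity class of $Q_n$ into one part of $\Delta$; thus $K$ preserves the parity classes and is even, and since $(Q_n)_K\cong\Delta$ is a bipartite rectagraph with $c_3=3$, \S\ref{s: par} yields $d_K\geq 7$.

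For uniqueness, if $\Gamma$ is a halved graph of $(Q_n)_{K_1}$ and of $(Q_n)_{K_2}$ with each $K_i$ even and $d_{K_i}\geq 7$, then each $(Q_n)_{K_i}$ is a bipartite rectagraph with $c_3=3$ having $\Gamma$ as a halved graph; such a rectagraph is unique up to isomorphism (its second part being reconstructible from $\Gamma$), so $(Q_n)_{K_1}\cong(Q_n)_{K_2}=:\Delta$. Each map $Q_n\to(Q_n)_{K_i}$ is then a universal quadrangle-lifting cover of $\Delta$ with deck group $K_i$, so by uniqueness of the universal cover there is $\psi\in\Aut(Q_n)$ with $\psi^{-1}K_1\psi=K_2$. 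I expect the main obstacle to be the regularity of $\phi$ in (i)$\Rightarrow$(ii): a priori $\Delta$ is only a quotient of $Q_n$ by an equivalence relation, and promoting this to a quotient by a group of automorphisms --- that is, to a \emph{normal} quotient --- is exactly where the rigidity of $Q_n$, encoded as the simple connectivity of its quadrangle complex, must be used; by contrast the sharp constant $7$ and the behaviour of $d_K$ under the quotient construction are precisely what the parameter study in \S\ref{s: par} is designed to supply.
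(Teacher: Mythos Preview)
Your proposal is correct and follows essentially the same route as the paper. The only differences are in packaging: for (ii)$\Rightarrow$(i) the paper gives a direct computation of the local structure (Lemma~\ref{loc Tn}) rather than routing back through \cite[Proposition 4.3.9]{BroCohNeu1989}, and for (i)$\Rightarrow$(ii) and uniqueness the paper simply cites Corollary~\ref{main rect} and Theorem~\ref{conjuate simple} (which in turn rest on \cite[Lemma 3.1 and Proposition 3.4]{BamDevFawPra2015}) rather than spelling out the universal-cover/regularity argument you describe; the underlying mathematics is the same.
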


\nopagebreak

In fact,  $K$ acts semiregularly on $VQ_n$ and is therefore a $2$-group. 
The even subgroups of $\Aut(Q_n)$ are essentially those groups $K$ for which $(Q_n)_K$ is bipartite (cf. Lemma \ref{even}), and the  parameter $d_K$ extends the concept of minimum distance for a binary linear code to arbitrary subgroups of $\Aut(Q_n)$. Note that either $d_K\leq n$ or $d_K=\infty$.

In order to prove Theorem \ref{main even}, we first  explore the relationship between the parameter $d_K$ and the normal quotient $(Q_n)_K$. It is well known that the $n$-cube $Q_n$ ($n\geq 1$) is a regular graph of valency $n$ with $a_{i-1}=0$ and $c_i=i$ for all $i$. In our next result, we see that the parameter $d_K$ measures the extent to which  $(Q_n)_K$ locally approximates the $n$-cube.

\begin{theorem}
\label{class. dist.}
Let $K\leq \Aut(Q_n)$ and $\ell$ a positive integer. The following are equivalent.
\begin{itemize}
\item[(i)]   $(Q_n)_K$ is  regular   of valency $n$ with  $a_{i-1}=0$ and $c_i=i$ for $1\leq i\leq \ell$.
\item[(ii)]  $d_K\geq 2\ell +1$.
\end{itemize}
\end{theorem}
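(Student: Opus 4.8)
The plan is to work through the quotient map $\pi\colon VQ_n\to V\Delta$, where $\Delta:=(Q_n)_K$, sending a vertex to its $K$-orbit, using throughout that $d_K$ equals the least $Q_n$-distance between two distinct vertices lying in a common $K$-orbit (so $d_K=\infty$ exactly when $K=1$). The technical backbone I would set up first is a \emph{path-lifting} lemma: any walk in $\Delta$ starting at $\pi(x)$ lifts to a walk of the same length in $Q_n$ starting at $x$, since at each step the relevant adjacent pair of orbits can be pulled back through an element of $K$. From this I get $d_\Delta(\pi(x),\pi(y))\leq d_{Q_n}(x,y)$ in general, and, crucially, that $d_\Delta(\pi(x),\pi(y))=r$ forces some vertex of $\pi^{-1}(\pi(y))$ to lie at $Q_n$-distance $r$ from $x$. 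I would also record that, from any representative of a $K$-orbit, the images of its $n$ neighbours in $Q_n$ are the same $n$ orbits, so the valency of $\pi(x)$ equals the number of distinct orbits among $\{\pi(y):y\sim x\}$ other than $\pi(x)$ itself.

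For (ii)$\Rightarrow$(i) I would assume $d_K\geq 2\ell+1$. Since $d_K\geq3$ the $n$ neighbours of each $x$ lie in $n$ distinct orbits, none equal to $\pi(x)$, so $\Delta$ is regular of valency $n$. Then, fixing $\pi(u)$ and a vertex $v$ at distance $j$ with $1\leq j\leq\ell$, I lift a $\Delta$-geodesic to obtain a representative $w\in v$ with $d_{Q_n}(u,w)=j$; the $n$ neighbours of $v$ are the distinct orbits $\pi(w')$ with $w'\sim w$, of which exactly $j$ have $d_{Q_n}(u,w')=j-1$ and the rest $d_{Q_n}(u,w')=j+1$. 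The first $j$ then lie at $\Delta$-distance exactly $j-1$ from $\pi(u)$ (at most $j-1$ since $d_\Delta\leq d_{Q_n}$, at least $j-1$ since they are adjacent to $v$), and a short lifting argument using $d_K\geq2\ell+1$ and $j\leq\ell$ rules out any neighbour of the second type being at $\Delta$-distance $j-1$, or (when $j\leq\ell-1$) at distance $j$, from $\pi(u)$. This delivers $c_j=j$ for $1\leq j\leq\ell$ and $a_j=0$ for $0\leq j\leq\ell-1$.

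For (i)$\Rightarrow$(ii) I would argue by contraposition: assume $d_K=m\leq2\ell$, witnessed by a $Q_n$-geodesic $y_0\sim y_1\sim\cdots\sim y_m$ with $\pi(y_0)=\pi(y_m)=:p$. The key preliminary is that, writing $h:=\lfloor m/2\rfloor$, if $\pi(y_h)$ were at distance $<h$ from $p$ then lifting would produce a vertex of $\pi^{-1}(\pi(y_h))$ at $Q_n$-distance $\leq2h-1<m$ from $y_h$ and distinct from it, contradicting the definition of $d_K$; hence $\pi(y_h)$ is at distance exactly $h$ from $p$, and likewise $\pi(y_{h+1})$ when $m$ is odd. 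If $m=1$ then $p$ has valency $\leq n-1$, and if $m=2$ then $\pi(y_1)$ sends two of its $n$ edges to $p$, so again valency $\leq n-1$; either way $\Delta$ is not $n$-regular. If $m=2h+1\geq3$ then $\pi(y_h)$ and $\pi(y_{h+1})$ are adjacent and both at distance $h\leq\ell-1$ from $p$, contradicting $a_h=0$. If $m=2h\geq4$ then, changing in $y_h$ each of the $h$ coordinates in which it differs from $y_0$ and each of the $h$ in which it differs from $y_m$, I obtain $2h$ neighbours of $\pi(y_h)$ that are pairwise distinct in $\Delta$ (their $Q_n$-distances are $2<m$) and all at $\Delta$-distance $h-1$ from $p$ (each is adjacent to $\pi(y_h)$ and within $Q_n$-distance $h-1$ of $y_0$ or of $y_m$), so $c_h\geq2h>h$, contradicting $c_h=h$.

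The hard part, I expect, is the even case $m=2h$ of (i)$\Rightarrow$(ii): I must verify that the $2h$ candidate neighbours of $\pi(y_h)$ really stay distinct after passing to $\Delta$ and really lie one step nearer to $p$, and that $\pi(y_h)$ itself is at distance exactly $h$ so that $c_h$ is the parameter being violated --- all of which hinges on the equality $2h=m=d_K$ through the $Q_n$-distance inequalities. The cases $m\leq2$ are easy but genuinely need separate treatment, since there the failure of (i) is a drop in valency rather than a failure of the $a$- or $c$-conditions. A standing subtlety throughout is that $\Delta$ need not be vertex-transitive, so ``valency $n$'' is an honest hypothesis and every local count has to be checked via the chosen representative rather than transported by an automorphism.
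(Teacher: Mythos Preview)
Your argument is essentially correct. The one omission is the case $d_K=0$ in your contrapositive for (i)$\Rightarrow$(ii): your witness is a $Q_n$-geodesic between \emph{distinct} vertices in a common $K$-orbit, so your case split only begins at $m=1$. When $d_K=0$ some $1\neq k\in K$ fixes a vertex $x$; then $k$ permutes $Q_n(x)$ nontrivially (a graph automorphism fixing a vertex and all its neighbours in a connected graph is the identity), so two neighbours of $x$ collapse in $\Delta$ and the valency of $\pi(x)$ drops below $n$. This is a one-line addition in the spirit of your $m\le 2$ cases.

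Beyond that, your (ii)$\Rightarrow$(i) is the same argument the paper gives (there split into a lemma that $d_K\ge 3$ is equivalent to $\pi$ being a covering, and a lemma computing $a_{\ell-1}$ and $c_\ell$). Your (i)$\Rightarrow$(ii), however, takes a genuinely different route. The paper first combines the hypotheses with a counting lemma to obtain $|\Pi_i(x^K)|=\binom{n}{i}$ for all $i\le\ell$, hence $\Pi_i(x^K)=\{(x+e)^K:|e|=i\}$ exactly; it then takes any $x$ and $1\ne k\in K$, writes $d_{Q_n}(x,x^k)=j+\ell$ with $j\ge 1$, and if $j\le\ell$ chooses $y$ at $Q_n$-distances $j$ and $\ell$ from $x$ and $x^k$, forcing first $j=\ell$ and then a contradiction with the sphere count at $y$. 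You instead analyse directly the cycle that a minimal collapsing geodesic produces in $\Delta$ and read off which parameter is violated (valency if $m\le 2$, $a_h$ if $m$ is odd, $c_h$ if $m$ is even). Your route is more elementary in that it bypasses the counting step, and it pinpoints exactly which clause of (i) fails for each value of $m$; the paper's route, on the other hand, yields the exact description of the spheres $\Pi_i(x^K)$ as a byproduct, which it reuses elsewhere.
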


Rectagraphs are precisely the connected graphs with $a_1=0$ and $c_2=2$, and they are always  regular by \cite[Proposition 1.1.2]{BroCohNeu1989}.
As a corollary of Theorem \ref{class. dist.}, \cite[Proposition 4.3.6]{BroCohNeu1989} and \cite[Proposition 3.4]{BamDevFawPra2015},
we obtain a characterisation of a large family of rectagraphs as normal quotients of $n$-cubes, from which Theorem \ref{main even}  quickly follows. 

\begin{corollary}
\label{main rect}
Let $\Pi$ be a graph. The following are equivalent.
\begin{itemize}
\item[(i)] $\Pi$ is  a rectagraph of valency $n$ with $a_2=0$ and $c_3=3$.
\item[(ii)] $\Pi\simeq(Q_n)_K$ for some  $K\leq \Aut(Q_n)$ such that  $d_K\geq 7$.
\end{itemize}
\end{corollary}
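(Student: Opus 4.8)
The plan is to derive both implications from Theorem~\ref{class. dist.} in the case $\ell = 3$, calling on the two cited structural results only in order to realise $\Pi$ as a \emph{normal} quotient of $Q_n$ for the direction (i)$\Rightarrow$(ii).

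First I would reconcile the two descriptions of a rectagraph at the level of intersection numbers: a connected graph is a rectagraph precisely when $a_1 = 0$ (triangle-freeness) and $c_2 = 2$ (every $2$-path lies in a unique quadrangle), and since $a_0 = 0$ and $c_1 = 1$ hold automatically, a connected graph is a rectagraph of valency $n$ with $a_2 = 0$ and $c_3 = 3$ if and only if it is regular of valency $n$ with $a_{i-1} = 0$ and $c_i = i$ for $1 \leq i \leq 3$ --- which is exactly condition~(i) of Theorem~\ref{class. dist.} when $\ell = 3$. (Recall rectagraphs are automatically regular by \cite[Proposition 1.1.2]{BroCohNeu1989}.)

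Granting this, (ii)$\Rightarrow$(i) is immediate: if $\Pi \simeq (Q_n)_K$ with $d_K \geq 7$, then $(Q_n)_K$ is connected, being a quotient of the connected graph $Q_n$, and by Theorem~\ref{class. dist.} it has the parameter profile above, so $\Pi$ is a rectagraph of valency $n$ with $a_2 = 0$ and $c_3 = 3$. For (i)$\Rightarrow$(ii) I would use \cite[Proposition 4.3.6]{BroCohNeu1989} together with \cite[Proposition 3.4]{BamDevFawPra2015} to write an arbitrary rectagraph of valency $n$ as a normal quotient $(Q_n)_K$ for some $K \leq \Aut(Q_n)$; the hypotheses on $\Pi$ then pass to $(Q_n)_K$, so condition~(i) of Theorem~\ref{class. dist.} holds with $\ell = 3$, and we conclude $d_K \geq 7$.

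The only step that is not pure bookkeeping, and the one I expect to be the main obstacle, is the passage from ``$\Pi$ is a quotient (cover) of $Q_n$'', as furnished by \cite[Proposition 4.3.6]{BroCohNeu1989}, to ``$\Pi$ is the normal quotient $(Q_n)_K$ for a subgroup $K \leq \Aut(Q_n)$'': one must check that the relevant covering group acts as a (semiregular) group of automorphisms of $Q_n$ whose orbit partition recovers $\Pi$, which is precisely what \cite[Proposition 3.4]{BamDevFawPra2015} should deliver.
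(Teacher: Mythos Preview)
Your proposal is correct and follows essentially the same route as the paper: both directions rest on Theorem~\ref{class. dist.} with $\ell=3$, and for (i)$\Rightarrow$(ii) the paper likewise obtains a covering $\pi:Q_n\to\Pi$ from the cited structural results (invoking \cite[Lemma~3.1]{BamDevFawPra2015}, which packages \cite[Proposition~4.3.6]{BroCohNeu1989}), sets $K:=K^\pi$, and appeals to \cite[Proposition~3.4]{BamDevFawPra2015} to identify $\Pi$ with $(Q_n)_K$. One small caution: the existence of the covering $Q_n\to\Pi$ already uses the full hypothesis (i), including $a_2=0$ and $c_3=3$, not merely that $\Pi$ is a rectagraph of valency $n$; your phrasing ``an arbitrary rectagraph of valency $n$'' is a slight overreach, though harmless since you are working under (i) throughout.
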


We also prove the following (cf. \cite[Proposition 3.4]{BamDevFawPra2015} and \cite[Lemma 5]{Mat1991}).

\begin{theorem}
\label{conjuate simple}
Let $K,L\leq \Aut(Q_n)$ where $d_K\geq 5$. Then $(Q_n)_K$ and $(Q_n)_L$ are isomorphic if and only if $K$ and $L$ are conjugate in $\Aut(Q_n)$. In particular, $\Aut((Q_n)_K)=N_{\Aut(Q_n)}(K)/K$.
\end{theorem}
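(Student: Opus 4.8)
The plan is to run a covering-space argument. One direction is immediate: if $L=\phi K\phi^{-1}$ for some $\phi\in\Aut(Q_n)$, then $\phi(Kx)=L\phi(x)$ for all $x$, so $\phi$ sends $K$-orbits to $L$-orbits and hence induces a graph isomorphism $(Q_n)_K\to(Q_n)_L$. For the converse, suppose $f\colon(Q_n)_K\to(Q_n)_L$ is a graph isomorphism. I will repeatedly use the defining property of $d_K$ that distinct vertices in a common $K$-orbit lie at distance at least $d_K$ in $Q_n$ (and likewise for $L$). In particular $d_K\ge 5$ forces $K$ to be semiregular, so the orbit map $\pi_K\colon Q_n\to(Q_n)_K$ is a graph covering (a bijection on each neighbourhood); and by Theorem~\ref{class. dist.} with $\ell=2$ the quotient $(Q_n)_K$ is a rectagraph of valency $n$, hence so is $(Q_n)_L\cong(Q_n)_K$, whence the reverse implication of Theorem~\ref{class. dist.} gives $d_L\ge 5$ and the same remarks apply to $L$.

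The heart of the matter is that $Q_n$ is \emph{quadrangle-simply-connected}: every closed walk in $Q_n$ reduces to a trivial walk by a sequence of elementary moves, namely (i) inserting or deleting a backtrack $(\dots,x,y,x,\dots)$, and (ii) replacing $(\dots,x,y,z,\dots)$ by $(\dots,x,w,z,\dots)$ where $(x,y,z,w)$ is a quadrangle. This holds because $Q_n=\mathrm{Cay}(\mathbb{Z}_2^n,\{e_1,\dots,e_n\})$ and $\mathbb{Z}_2^n=\langle e_1,\dots,e_n\mid e_i^2,\ [e_i,e_j]\rangle$, the relators $e_i^2$ and $[e_i,e_j]$ giving exactly moves (i) and (ii) (cf.\ \cite[Proposition 4.3.6]{BroCohNeu1989}). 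Now fix $\bar u\in V(Q_n)_K$, put $\bar w=f(\bar u)$, and choose lifts $u\in\pi_K^{-1}(\bar u)$, $w\in\pi_L^{-1}(\bar w)$. Define $\phi\colon VQ_n\to VQ_n$ by sending $x$ to the endpoint of the unique lift starting at $w$ of the walk $f(\pi_K(P))$ in $(Q_n)_L$, where $P$ is any walk from $u$ to $x$. To see this is well defined, note that $\pi_K$ carries quadrangles of $Q_n$ to quadrangles of $(Q_n)_K$ (the four images are pairwise at distance $\le 2<d_K$, hence distinct) and so preserves moves (i) and (ii); $f$ preserves them because it is a graph isomorphism; and lifting along $\pi_L$ preserves them, where for (ii) one uses that $c_2=2$ both in $Q_n$ and in $(Q_n)_L$, so the quadrangle on a given $2$-path is unique. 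Since moves preserve a walk's endpoints and a second choice $P'$ changes $P$ by a closed walk at $u$, the lift of $f(\pi_K(P(P')^{-1}))$ from $w$ is closed, and unique path lifting then forces the endpoint to depend only on $x$. By construction $\phi(u)=w$, $\pi_L\circ\phi=f\circ\pi_K$, and $\phi$ is a graph homomorphism.

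Finally I check $\phi$ is an automorphism conjugating $K$ to $L$. Constructing $\phi'$ from $f^{-1}$ in the same way, the composite $\phi'\phi$ fixes $u$, is a graph homomorphism, and satisfies $\pi_K\circ(\phi'\phi)=\pi_K$, so it maps every vertex into its own $K$-orbit; since distinct $K$-orbit-mates are at distance $\ge 5$, an induction on distance from $u$ along edges gives $\phi'\phi=\mathrm{id}$, and symmetrically $\phi\phi'=\mathrm{id}$, so $\phi\in\Aut(Q_n)$. The same induction identifies the deck group $\{g\in\Aut(Q_n):\pi_K g=\pi_K\}$ with $K$ (and that of $\pi_L$ with $L$). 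For $g\in K$ we then get $\pi_L(\phi g\phi^{-1})=f\,\pi_K\,g\,\phi^{-1}=f\,\pi_K\,\phi^{-1}=\pi_L$, using $\pi_K g=\pi_K$ and $\pi_K\phi^{-1}=f^{-1}\pi_L$; hence $\phi g\phi^{-1}$ lies in the deck group of $\pi_L$, i.e.\ $\phi K\phi^{-1}\le L$, and symmetry gives equality. For the last statement, take $L=K$: the map $N_{\Aut(Q_n)}(K)\to\Aut((Q_n)_K)$ sending $\phi$ to the automorphism it induces on the quotient is a homomorphism, surjective by the lifting just carried out, with kernel the deck group $K$, so $\Aut((Q_n)_K)=N_{\Aut(Q_n)}(K)/K$.

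The step I expect to be the main obstacle is the well-definedness of $\phi$: it requires both the combinatorial fact that $Q_n$ is quadrangle-simply-connected and the verification that all three operations involved — $\pi_K$, $f$, and lifting along $\pi_L$ — are compatible with the quadrangle moves. The compatibility of lifting with move (ii) is precisely where the hypothesis $d_K\ge 5$ enters, since by Theorem~\ref{class. dist.} it is equivalent to $c_2=2$ in the quotients, which is what makes the quadrangle on a $2$-path unique and hence makes the lift of a quadrangle move again a quadrangle move.
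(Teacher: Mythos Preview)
Your argument is correct and follows essentially the same covering-theoretic route as the paper: lift the isomorphism $(Q_n)_K\to(Q_n)_L$ to an automorphism of $Q_n$ using that $Q_n$ is the universal object among rectagraphs of valency $n$, then identify the deck groups with $K$ and $L$. The paper packages the lifting step by quoting two lemmas from \cite{BamDevFawPra2015} (existence and uniqueness of a covering $Q_n\to Q_n$ with prescribed values on $\{0\}\cup Q_n(0)$), while you unpack this as quadrangle--simple-connectivity of $Q_n$ together with an explicit walk-lifting construction; your identification of the deck group with $K$ by the distance-induction is exactly what the paper cites as \cite[Lemma~2.4]{BamDevFawPra2015}. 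So the substance is the same; your version is more self-contained, at the cost of having to verify by hand that $\pi_K$, $f$, and the $\pi_L$-lift each respect the quadrangle moves.
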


As a consequence of Theorem \ref{main even}, we can determine the automorphism group of a connected locally $T_n$ graph. In the following,  $E_n$ denotes the set of vectors in $\mathbb{F}_2^n$ with even weight, and $E_n:S_n$ is an index 2 subgroup of $\Aut(Q_n)=\mathbb{F}_2\wr S_n$.

\begin{theorem}
 \label{main aut}
Let $\Gamma$ be a halved graph of $(Q_n)_K$ where  $K\leq \Aut(Q_n)$ is even with $d_K\geq 7$ and $n\geq 5$. Then $\Aut(\Gamma)=N_{E_n:S_n}(K)/K$.\end{theorem}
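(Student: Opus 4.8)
The plan is to identify $\Aut(\Gamma)$ with the stabiliser in $\Aut((Q_n)_K)$ of the relevant half of the bipartition, and then to apply Theorem~\ref{conjuate simple}. By Theorem~\ref{main even}, $\Gamma$ is connected and locally $T_n$, and $\Pi:=(Q_n)_K$ is a connected bipartite rectagraph with $c_3=3$ (Lemma~\ref{even} and Corollary~\ref{main rect}), the two parts of $\Pi$ being the images of the even-weight and the odd-weight vectors of $\mathbb{F}_2^n$. Let $X$ be the part on which $\Gamma$ lives, so that adjacency in $\Gamma$ coincides with ``distance $2$ in $\Pi$'' restricted to $X$. Since $\Pi$ is connected and $n\geq 3$, distinct vertices of the other part $Y$ have distinct neighbourhoods in $X$: two such vertices sharing a neighbour would lie at distance $2$ in $\Pi$ and hence, as $c_2=2$, have exactly two common neighbours, forcing $n=2$. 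It follows that the setwise stabiliser $\Aut(\Pi)_X$ acts faithfully on $X$, and this action lands in $\Aut(\Gamma)$.

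For the reverse inclusion I would reconstruct $\Pi$ canonically from $\Gamma$. The maximal cliques of $T_n$ are the $n$ point-cliques ($2$-subsets through a fixed point, of size $n-1$) and the $\binom{n}{3}$ triangle-cliques ($2$-subsets of a fixed $3$-set, of size $3$); consequently every maximal clique of $\Gamma$ has size $n$ or $4$, and since $n\geq 5$ the size-$n$ maximal cliques form an intrinsically defined set. A computation inside $T_n$, together with \cite[Proposition~4.3.9]{BroCohNeu1989}, shows that $\Pi$ is isomorphic, by a map fixing $X$ pointwise, to the bipartite graph whose second part is the set of size-$n$ maximal cliques of $\Gamma$, with $x$ adjacent to $C$ precisely when $x\in C$. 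Hence $\Pi$ is determined by $\Gamma$, so every $h\in\Aut(\Gamma)$ extends uniquely to an automorphism of $\Pi$ fixing $X$ setwise, and combined with the first paragraph we obtain $\Aut(\Gamma)\cong\Aut(\Pi)_X$.

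It remains to compute $\Aut(\Pi)_X$. As $d_K\geq 7>5$, Theorem~\ref{conjuate simple} gives $\Aut(\Pi)=N_{\Aut(Q_n)}(K)/K$. An element $g\in N_{\Aut(Q_n)}(K)$ induces an automorphism of $\Pi$ fixing $X$ setwise if and only if $g$ maps even-weight vectors of $\mathbb{F}_2^n$ to even-weight vectors, i.e. if and only if $g$ lies in the parity-preserving index-$2$ subgroup $E_n:S_n$ of $\Aut(Q_n)=\mathbb{F}_2\wr S_n$. Since $K$ is even we have $K\leq E_n:S_n$, so
\[
\Aut(\Pi)_X=\bigl(N_{\Aut(Q_n)}(K)\cap(E_n:S_n)\bigr)/K=N_{E_n:S_n}(K)/K,
\]
which gives $\Aut(\Gamma)=N_{E_n:S_n}(K)/K$.

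The main obstacle is the reconstruction step of the second paragraph: one has to verify not only that the two clique sizes separate — which fails exactly when $n=4$, hence the hypothesis $n\geq 5$ — but that adjoining the size-$n$ maximal cliques as new vertices produces $\Pi$ itself, rather than merely some bipartite rectagraph with $c_3=3$ having $\Gamma$ as a halved graph, and that the resulting identification is equivariant for $\Aut(\Gamma)$. This is where \cite[Proposition~4.3.9]{BroCohNeu1989} and the explicit maximal-clique structure of $T_n$ are needed, and where $n\geq 5$ is indispensable: for $n=4$ one may take $\Gamma\cong K_{2,2,2,2}$, whose automorphism group properly contains $N_{E_4:S_4}(K)/K$.
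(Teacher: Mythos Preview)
Your proposal is correct and follows essentially the same route as the paper: identify $\Aut(\Gamma)$ with the setwise stabiliser $\Aut(\Pi)_X$, compute $\Aut(\Pi)=N_{\Aut(Q_n)}(K)/K$ via Theorem~\ref{conjuate simple}, and then intersect with $E_n:S_n$ using the parity description of the parts from Lemma~\ref{even}(i). The only difference is packaging: where you sketch the reconstruction of $\Pi$ from $\Gamma$ via the size-$n$ maximal cliques (leaning on \cite[Proposition~4.3.9]{BroCohNeu1989}), the paper simply invokes \cite[Lemma~4.2]{BamDevFawPra2015}, which records precisely the statement $\Aut(\Gamma)=\Aut(\Pi)_{V\Gamma}$ for $n\geq 5$.
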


When $C$ is a binary linear code in $\mathbb{F}_2^n$, the coset graph of $C$ is the normal quotient $(Q_n)_C$, and if $C$ has minimum distance at least $5$, then $(Q_n)_C$ is a rectagraph. Moreover, as noted earlier, if $C$ has minimum distance at least $7$, then any connected component of the distance $2$ graph of $(Q_n)_C$ is locally $T_n$. Every such rectagraph and locally $T_n$ graph is vertex-transitive. However, in general there are groups $K\leq \Aut(Q_n)$ for which neither $(Q_n)_K$ nor a corresponding locally $T_n$ graph is vertex-transitive (cf. Examples \ref{not vt} and \ref{lt not vt}); in fact, in the examples we give, the parameter $d_K$ is nearly maximal.

Another property of a binary linear code $C$ is that when $(Q_n)_C$ is bipartite and $C$ has minimum distance at least $2$, the halved graphs of $(Q_n)_C$ are isomorphic. Indeed, this is a consequence of the vertex-transitivity of $(Q_n)_C$. However, this is not true for arbitrary subgroups of $\Aut(Q_n)$ (cf. Example \ref{exp:halved}). In light of Theorem \ref{main even}, it would be interesting to classify those $K\leq \Aut(Q_n)$ with $d_K\geq 2$ for which the halved graphs of a bipartite $(Q_n)_K$ are isomorphic; 
we prove that this holds for every such $K$ when $n$ is odd  (cf.   Corollary \ref{odd iso}).

This paper is organised as follows. In \S \ref{s: prelim}, we give some notation and  basic definitions. In \S \ref{s: par}, we first define and analyse the parameter $d_K$, and then we   prove Theorem \ref{class. dist.},  Corollary \ref{main rect} and Theorem \ref{conjuate simple}. In \S \ref{s: halved}, we consider some properties of the distance $2$ graph of $(Q_n)_K$, and in \S \ref{s: proof}, we prove  Theorems \ref{main even} and \ref{main aut}.

\section{Notation and basic definitions}
\label{s: prelim}

All groups in this paper are finite, all actions are written on the right, and all graphs  are finite and undirected with no multiple
edges or loops. Basic graph theoretical terminology may be found in \cite{BroCohNeu1989}.

For groups $G$ and $H$, we denote a semidirect product of $G$ with $H$ (with normal subgroup $G$) by $G: H$, and if  $H$ acts faithfully on $[n]:=\{1,\ldots,n\}$, then we denote the wreath product of $G$ with $H$ by $G\wr H$.  
If $G$ acts on $\Omega$
and $\omega\in\Omega$, then   $G_{\omega}$ denotes the pointwise stabiliser of $\omega$ in $G$ and 
$\omega^G$  the orbit of $G$ containing $\omega$. We say that $G$ is  \textit{transitive} if $\omega^G=\Omega$, and \textit{semiregular} if $G_\omega=1$ for all $\omega\in\Omega$. The symmetric group  on $n$ points is denoted by $S_n$.

Let $\Gamma$ be a graph. We write $V\Gamma$ for the vertex set of $\Gamma$, $E\Gamma$ for the edge
set of $\Gamma$, and $\Aut(\Gamma)$ for the automorphism group of $\Gamma$. If $\Aut(\Gamma)$ is transitive on $V\Gamma$, then $\Gamma$ is \textit{vertex-transitive}. If $X\subseteq V\Gamma$, then $[X]$ is the
subgraph of $\Gamma$ induced by $X$. The distance between $u,v\in V\Gamma$ is denoted by
$d_\Gamma(u,v)$.
For $u\in V\Gamma$ and  $i\geq 0$, we define $\Gamma_i(u):=\{v\in
V\Gamma:d_\Gamma(u,v)=i\}$ and $\Gamma(u):=\Gamma_1(u)$. 
For $u,v\in V\Gamma$ such that $d_\Gamma(u,v)=i$,  let $c_i(u,v) :=|\Gamma_{i-1}(u)\cap \Gamma(v)|$ and $a_i(u,v) :=|\Gamma_{i}(u)\cap \Gamma(v)|$.
We write $c_i$ (respectively $a_i$) whenever $c_i(u,v)$ (respectively $a_i(u,v)$)
does not depend on the choice of $u$ and $v$.
  The complete graph on $n$ vertices is
denoted by $K_n$, and the complete multipartite graph with $n$ parts of size $a$ is denoted by
$K_{n[a]}$.

The \textit{distance 2 graph} $\Gamma_2$ of a graph $\Gamma$ has vertex set $V\Gamma$, where two vertices
are adjacent whenever their distance in $\Gamma$ is 2. If $\Gamma$ is connected but not bipartite,
then $\Gamma_2$ is connected, and if $\Gamma$ is connected and bipartite, then $\Gamma_2$ has
exactly two connected components; these are called the \textit{halved graphs} of $\Gamma$. 
The \textit{bipartite double} $\Gamma.2$ of a graph $\Gamma$ has vertex set $V\Gamma\times \mathbb{F}_2$,
where vertices $(u,x)$ and $(v,y)$ are adjacent whenever $u$ and $v$ are adjacent in $\Gamma$ and
$x\neq y$.

For  graphs $\Gamma$ and $\Pi$, a surjective map $\pi:\Gamma\to\Pi$ is a \textit{covering} if $\pi$ induces a bijection from $\Gamma(x)$ onto $\Pi(x\pi)$ for all $x\in V\Gamma$.  We denote by $K^\pi$ the subgroup $\{g\in\Aut(\Gamma): g\pi=\pi\}$ of $\Aut(\Gamma)$, where $g\pi$ denotes the composition of the functions $g$ and $\pi$.

Let $\mathbb{F}_2^n$ be the vector space of $n$-tuples over the field $\mathbb{F}_2$. The
\textit{weight} $\wt{u}$ of a vector $u\in \mathbb{F}_2^n$ is the number of non-zero coordinates in
$u$, and the \textit{Hamming distance} of $u,v\in\mathbb{F}_2^n$ is the number of coordinates at
which $u$ and $v$ differ, or equivalently, $\wt{u+v}$.  For an $m$-subset $\{i_1,\ldots,i_m\}$ of $[n]$, let $e_{i_1,\ldots,i_m}$ denote the
vector of weight $m$ in $ \mathbb{F}_2^n$ whose $i_j$-th coordinate is 1 for $1\leq j\leq m$. The subspace of $\mathbb{F}_2^n$ consisting of vectors with
even weight is denoted by $E_n$. 

For $n\geq 1$, the \textit{$n$-cube} $Q_n$ is  the graph with vertex set   $\mathbb{F}_2^n$, where two vectors are adjacent whenever their Hamming distance is 1. The $n$-cube is a connected regular bipartite graph of valency $n$ with parts $E_n$ and $\mathbb{F}_2^n\setminus E_n$. Its automorphism group is $\mathbb{F}_2\wr
S_n=\mathbb{F}_2^n: S_n$, where $\mathbb{F}_2^n$ acts on $VQ_n$ by translation 
and $S_n$ acts by permuting coordinates. A  subgroup $K$ of $\Aut(Q_n)$ is   \textit{even} if $K\leq E_n: S_n$. We write elements of $\mathbb{F}_2^n: S_n$  in the form $(x,\sigma)$ where $x\in \mathbb{F}_2^n$ and $\sigma\in S_n$. A \textit{binary linear code} $C$ is a subspace of
$\mathbb{F}_2^n$, or, equivalently, an additive subgroup of $\mathbb{F}_2^n$. The \textit{minimum distance} of  $C$ is  the minimum weight of the non-zero codewords when $C\neq 0$, and  $\infty$  otherwise. The  \textit{coset graph} of $C$ is  the  normal quotient $(Q_n)_C$.

\section{The minimum distance of $K\leq \Aut(Q_n)$}
\label{s: par}

Let  $K\leq \Aut(Q_n)$.  As in \cite{Mat1991}, we define the \textit{minimum distance of $K$}, denoted by $d_K$, as follows:
$$d_K:=\left\{
\begin{array}{ll}
 \min \{ d_{Q_n}(x,x^k):x\in VQ_n,1\neq k\in K\} & \mbox{if}\ K\neq 1, \\
\infty & \mbox{otherwise}.
\end{array}
\right .
$$ 
 This definition generalises the concept of minimum distance for a binary linear code, for if $C\leq \mathbb{F}_2^n$, then   $d_{Q_n}(x,x^c)=d_{Q_n}(x,x+c)=\wt{c}$ for all $x\in VQ_n$ and $c\in C$.
 
 Observe that $d_K\geq 1$ if and only if $K$ is semiregular, which is true  if and only if $(Q_n)_K$ has order $2^n/|K|$.   Moreover,  $d_K=d_{g^{-1}Kg}$ for all $g\in \Aut(Q_n)$, and if $K\neq 1$, then $d_K\leq n$.

We wish to find a canonical set of representatives for the vertices at distance $\ell$ from vertex $x^K$ in $(Q_n)_K$. This is not always possible, but we can say the following. Note that $(Q_n)_\ell(x)=\{x+e\in VQ_n:\wt{e}=\ell\}$ for all $x\in VQ_n$ and $\ell\in [n]$.

\begin{lemma}
\label{nbd}
Let $K\leq \Aut(Q_n)$ and $\Pi:=(Q_n)_K$. Then $$\Pi_\ell(x^K)\subseteq\{(x+e)^K\in V\Pi:\wt{e}=\ell\}$$ for every  positive integer $\ell$ and $x
\in VQ_n$.
\end{lemma}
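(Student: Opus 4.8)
The plan is to unwind the definition of the quotient graph and use the fact that paths in $(Q_n)_K$ lift to paths in $Q_n$. Fix $x\in VQ_n$ and a positive integer $\ell$, and suppose $y^K\in \Pi_\ell(x^K)$, so there is a path of length $\ell$ in $\Pi$ from $x^K$ to $y^K$. Since $\pi\colon Q_n\to (Q_n)_K$ is the quotient map and adjacency in $\Pi$ is witnessed by adjacency between some pair of representatives, I would argue inductively: given a path $x^K=B_0,B_1,\ldots,B_\ell=y^K$ in $\Pi$, I can choose $z_0:=x$, and then having chosen $z_{i-1}\in B_{i-1}$, the adjacency $B_{i-1}\sim B_i$ gives adjacent $u\in B_{i-1}$, $v\in B_i$ in $Q_n$; applying the element $k\in K$ with $u^k=z_{i-1}$ (which exists since $B_{i-1}=u^K$) moves $v$ to a vertex $z_i:=v^k\in B_i$ adjacent to $z_{i-1}$. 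This produces a walk $x=z_0\sim z_1\sim\cdots\sim z_\ell$ in $Q_n$ with $z_\ell\in y^K$, so $z_\ell=x+e$ where $\wt e\le \ell$ (a walk of length $\ell$ in $Q_n$ ends at Hamming distance at most $\ell$ from its start, with the same parity).

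The remaining point is to promote ``$\wt e\le \ell$'' to ``$\wt e=\ell$''. Here I would use that $d_\Pi(x^K,z_\ell^K)=d_\Pi(x^K,y^K)=\ell$ together with the observation that if $z_\ell=x+e$ then $z_\ell^K$ is joined to $x^K$ by a walk of length $\wt e$ in $\Pi$ (pushing down the obvious geodesic in $Q_n$ from $x$ to $x+e$), hence $\ell=d_\Pi(x^K,z_\ell^K)\le \wt e\le \ell$, forcing $\wt e=\ell$. Thus $y^K=z_\ell^K=(x+e)^K$ with $\wt e=\ell$, which is exactly the asserted containment.

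I expect the only subtlety is the lifting step, i.e.\ making precise that a walk in a normal quotient lifts (vertex by vertex, starting from any chosen preimage) to a walk in the original graph of the same length. This is essentially the statement that the quotient map $Q_n\to(Q_n)_K$ restricts to a surjection $Q_n(z)\to \Pi(z^K)$ for every vertex $z$, which follows directly from the definition of the quotient graph and the transitivity of $K$ on each fibre; once that is in hand the induction is routine and the weight/parity bookkeeping is immediate. No properties of $d_K$ or evenness of $K$ are needed for this lemma.
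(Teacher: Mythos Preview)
Your proof is correct and takes essentially the same approach as the paper: lift a shortest $\Pi$-path to a walk in $Q_n$ starting at $x$, obtain a representative $x+e$ with $\wt{e}\le\ell$, and then use that the $Q_n$-geodesic from $x$ to $x+e$ projects to a walk of length $\wt{e}$ in $\Pi$ to force $\wt{e}=\ell$. The paper packages the lifting as an induction on $\ell$ (handling $\ell=1$ directly and deriving a contradiction if the new coordinate repeats), but the content is identical.
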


\begin{proof}
Fix $x\in VQ_n$.
  If $\Pi_1(x^K)$ is empty, then $\Pi_\ell(x^K)$ is empty for all $\ell\geq 1$, and we are done. Otherwise, let $y^K\in \Pi_1(x^K)$.  Then $y^{k_1}$ is adjacent to $x^{k_2}$ for some $k_1,k_2\in K$, so $y^{k_1k_2^{-1}}$ is adjacent to $x$. Thus $y^{k_1k_2^{-1}}=x+e_i$ for some $i$, and  $y^K=(x+e_i)^K$. 
 Suppose that $\ell>1$.  Again, if $\Pi_\ell(x^K)$ is empty, then we are done, so we assume otherwise. Let  $y^K\in \Pi_\ell(x^K)$. There exists a path $(z_0^K,z_1^K,\ldots,z_\ell^K)$ of length $\ell$ in $\Pi$ where $z_0^K=x^K$ and $z_\ell^K=y^K$. Now $z_{\ell-1}^K\in \Pi_{\ell-1}(x^K)$, so we may assume by induction that  $z_{\ell-1}=x+e_{i_1,\ldots,i_{\ell-1}}$ for some  $i_1<\cdots<i_{\ell-1}\in [n]$. Since $y^K\in \Pi(z_{\ell-1}^K)$, there exists $i_\ell\in [n]$ such that $y^K=(z_{\ell-1}+e_{i_\ell})^K$.  If $i_\ell\in \{i_1,\ldots,i_{\ell-1}\}$, then 
there is a path between $y^K$ and $x^K$ of length at most $\ell-2$, a contradiction. Thus $y^K=(x+e_{i_1,\ldots,i_{\ell-1}}+e_{i_\ell})^K$ where $\wt{e_{i_1,\ldots,i_{\ell-1}}+e_{i_\ell}}=\ell$.\end{proof}

Next we have an elementary observation concerning $d_K$.

\begin{lemma}
\label{trick}
Let $K\leq \Aut(Q_n)$. If $x^K= y^K$ for distinct $x,y\in VQ_n$, then $\wt{x+y}\geq d_K$.
\end{lemma}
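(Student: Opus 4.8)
The plan is simply to unwind the definitions. Suppose $x^K = y^K$ with $x \neq y$. Since $x$ and $y$ then lie in a common $K$-orbit, there exists $k \in K$ with $x^k = y$, and because $x \neq y$ this $k$ is not the identity; in particular $K \neq 1$, so $d_K$ is given by the minimum in the displayed formula rather than being $\infty$. (If instead $K = 1$ there is nothing to prove, as the hypothesis is then vacuous since every $K$-orbit is a singleton.)

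Next I would use that the graph distance in $Q_n$ coincides with Hamming distance, so $d_{Q_n}(x,y)$ is the number of coordinates at which $x$ and $y$ differ, namely $\wt{x+y}$. Hence
$$\wt{x+y}=d_{Q_n}(x,y)=d_{Q_n}(x,x^k).$$
Since $k$ is a non-identity element of $K$, the quantity $d_{Q_n}(x,x^k)$ is one of the terms over which the minimum defining $d_K$ is taken, so $d_{Q_n}(x,x^k)\geq d_K$. Combining this with the display gives $\wt{x+y}\geq d_K$, as required.

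This argument is immediate and presents no real obstacle; the only minor points to handle are isolating the trivial case $K=1$ and recording that distance in $Q_n$ is Hamming distance (which is already implicit in the remark preceding Lemma~\ref{nbd}).
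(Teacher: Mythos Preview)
Your proof is correct and follows exactly the same argument as the paper's: pick $1\neq k\in K$ with $x^k=y$, then $\wt{x+y}=d_{Q_n}(x,x^k)\geq d_K$. The only difference is that you spell out the trivial case $K=1$ and the identification of $d_{Q_n}$ with Hamming distance, which the paper leaves implicit.
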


\begin{proof}
If $x^K= y^K$ for distinct $x,y\in VQ_n$, then there exists $1\neq k\in K$ such that $x^k=y$, so $\wt{x+y}=d_{Q_n}(x,x^k)\geq d_K$.
\end{proof}

The following is a straightforward consequence of Lemma \ref{trick} and the definition of $d_K$.

\begin{lemma}
\label{cycle}
Let $1\neq K\leq \Aut(Q_n)$. If $d_K\geq 3$, then $(Q_n)_K$ has a cycle of length $d_K$.
\end{lemma}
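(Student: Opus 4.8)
The plan is to produce an explicit closed walk of length $d_K$ in $\Pi := (Q_n)_K$ and then argue that it is in fact a cycle, using the hypothesis $d_K \geq 3$. First I would unpack the definition of $d_K$: choose $x \in VQ_n$ and $1 \neq k \in K$ realising the minimum, so that $d_{Q_n}(x, x^k) = d_K =: d$. Write $y := x^k$, so $y \neq x$ (as $d \geq 1$, $K$ is semiregular), $\wt{x+y} = d$, and $x^K = y^K$ by definition of the normal quotient. Pick coordinates $i_1 < \cdots < i_d$ with $x + y = e_{i_1,\ldots,i_d}$, and set $z_0 := x$ and $z_t := x + e_{i_1,\ldots,i_t}$ for $1 \leq t \leq d$, so that $z_d = y$. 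In $Q_n$ consecutive $z_t$ differ in one coordinate, hence $(z_0^K, z_1^K, \ldots, z_d^K)$ is a closed walk of length $d$ in $\Pi$ (closed because $z_0^K = x^K = y^K = z_d^K$), with every step a genuine edge of $\Pi$.

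It remains to show this walk is a cycle, i.e.\ that the vertices $z_0^K, z_1^K, \ldots, z_{d-1}^K$ are pairwise distinct (and distinct from $z_d^K$ only insofar as $z_d^K = z_0^K$, which is fine for a cycle). Suppose $z_s^K = z_t^K$ for some $0 \leq s < t \leq d-1$. Then $z_s$ and $z_t$ are distinct vertices of $Q_n$ with the same $K$-orbit, so by Lemma \ref{trick} we get $\wt{z_s + z_t} \geq d_K = d$. But $z_s + z_t = e_{i_{s+1}, \ldots, i_t}$ has weight $t - s \leq d - 1 < d$, a contradiction. The same computation with $t = d$, $s \geq 1$ gives $\wt{z_s + z_d} = d - s \leq d - 1 < d$ unless $s = 0$, so $z_d^K = z_s^K$ forces $s = 0$; thus no vertex of the walk coincides with $z_0^K = z_d^K$ except the designated endpoints. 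Hence all of $z_0^K, \ldots, z_{d-1}^K$ are distinct, and the closed walk is a cycle of length $d = d_K$. Note we have not yet used $d_K \geq 3$.

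The role of the hypothesis $d_K \geq 3$ is precisely to guarantee that a closed walk of length $d$ whose vertices are pairwise distinct is actually a cycle in the graph-theoretic sense: a "cycle" of length $1$ would be a loop and of length $2$ a repeated edge, neither of which occurs in a simple graph. Since $d = d_K \geq 3$, the closed walk $(z_0^K, \ldots, z_{d-1}^K, z_0^K)$ with distinct internal vertices is a bona fide cycle of length $d_K$. I would phrase the write-up so that the distinctness argument is the substantive content and the appeal to $d_K \geq 3$ is the one-line remark at the end ensuring the object deserves the name "cycle." The main (and only mild) obstacle is bookkeeping the two indexing edge cases $s < t \leq d-1$ versus $t = d$ when applying Lemma \ref{trick}; there is no real difficulty, since in every case $\wt{z_s + z_t}$ is a difference of indices strictly less than $d$.
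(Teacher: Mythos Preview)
Your proof is correct and follows precisely the approach the paper suggests: the paper omits the proof entirely, noting only that the lemma is ``a straightforward consequence of Lemma~\ref{trick} and the definition of $d_K$,'' and your argument uses exactly these two ingredients to build an explicit geodesic in $Q_n$ from $x$ to $x^k$ and verify via Lemma~\ref{trick} that its image in $(Q_n)_K$ has no repeated vertices. The bookkeeping with the $s<t\leq d-1$ and $t=d$ cases is handled cleanly, and your remark on the role of $d_K\geq 3$ is exactly right.
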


By making some assumptions on $d_K$, we can improve Lemma \ref{nbd}.

\begin{lemma}
\label{nbd2}
Let $K\leq \Aut(Q_n)$ and $\Pi:=(Q_n)_K$.  If
$d_K\geq 2\ell$ for some positive integer $\ell$,  then $$\Pi_\ell(x^K)=\{(x+e)^K\in V\Pi:\wt{e}=\ell\}$$  for all $x\in VQ_n$.
\end{lemma}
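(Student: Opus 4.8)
The plan is to combine Lemma~\ref{nbd}, which already supplies the inclusion $\Pi_\ell(x^K)\subseteq\{(x+e)^K\in V\Pi:\wt{e}=\ell\}$, with the stronger hypothesis $d_K\geq 2\ell$ to obtain the reverse inclusion. Thus the whole task reduces to showing that for each $e\in\mathbb{F}_2^n$ with $\wt{e}=\ell$, the vertex $(x+e)^K$ lies at distance exactly $\ell$ from $x^K$ in $\Pi$.

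First I would record the easy upper bound. Writing $e=e_{i_1,\ldots,i_\ell}$, the vertices $x,\ x+e_{i_1},\ x+e_{i_1,i_2},\ \ldots,\ x+e$ form a path of length $\ell$ in $Q_n$, so their images $x^K,(x+e_{i_1})^K,\ldots,(x+e)^K$ form a walk (possibly with repeated consecutive vertices) of length $\ell$ in $\Pi$; deleting repetitions shows $d_\Pi(x^K,(x+e)^K)\leq \ell$.

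For the matching lower bound I would argue by contradiction: suppose $m:=d_\Pi(x^K,(x+e)^K)<\ell$. If $m\geq 1$, then Lemma~\ref{nbd} (with $\ell$ there replaced by $m$) produces some $f\in\mathbb{F}_2^n$ with $\wt{f}=m$ and $(x+e)^K=(x+f)^K$; if $m=0$ we may take $f=0$, since then $(x+e)^K=x^K=(x+f)^K$. In either case $\wt{f}=m<\ell=\wt{e}$, so $x+e\neq x+f$, and Lemma~\ref{trick} applied to these two distinct vertices gives $\wt{e+f}=\wt{(x+e)+(x+f)}\geq d_K\geq 2\ell$. But subadditivity of Hamming weight yields $\wt{e+f}\leq\wt{e}+\wt{f}=\ell+m<2\ell$, a contradiction. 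Hence $d_\Pi(x^K,(x+e)^K)=\ell$, which gives $\{(x+e)^K:\wt{e}=\ell\}\subseteq\Pi_\ell(x^K)$ and finishes the proof.

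There is no real obstacle here: the statement falls out of the two preceding lemmas together with subadditivity of Hamming weight. The only points needing a little attention are the degenerate case $m=0$ in the contradiction step, and the simple but crucial observation that $\wt{e}\neq\wt{f}$ is exactly what forces $x+e$ and $x+f$ to be distinct, so that Lemma~\ref{trick} is applicable.
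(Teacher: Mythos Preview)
Your proof is correct and follows essentially the same route as the paper's: both use Lemma~\ref{nbd} for one inclusion, then establish the reverse inclusion by noting $d_\Pi(x^K,(x+e)^K)\leq\ell$ and ruling out strict inequality via Lemma~\ref{nbd}, Lemma~\ref{trick}, and the triangle inequality for Hamming weight. You are slightly more explicit than the paper about the upper bound and the $m=0$ case, but the argument is the same.
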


\begin{proof}
Fix $x\in VQ_n$. By Lemma \ref{nbd}, $\Pi_\ell(x^K)\subseteq \{(x+e)^K\in V\Pi:\wt{e}=\ell\}$. In particular, we are done if $\ell>n$, so we assume that $\ell\leq n$, and let $y\in VQ_n$ be such that $\wt{x+y}=\ell$. Now $d_\Pi(x^K,y^K)\leq \ell$. If $d_\Pi(x^K,y^K)<\ell$, then $y^K=z^K$ for some $z\in VQ_n$ where $\wt{x+z}<\ell$ by Lemma \ref{nbd}, but $y\neq z$, so  $2\ell> \wt{x+y+x+z}= \wt{ y+z}\geq d_K$ by Lemma \ref{trick}, a contradiction. Thus $y^K\in \Pi_\ell(x^K)$.
\end{proof}

Although $\tbinom{n}{\ell}$ is  an upper bound on $|\{(x+e)^K\in V\Pi:\wt{e}=\ell\}|$, these need not be equal in general, even for $d_K\geq 2\ell$. Indeed, suppose that $d_K=2\ell$ where $\ell$ is a positive integer. There exists $y\in VQ_n$ and $1\neq k\in K$ such that $d_{Q_n}(y,y^k)=2\ell$, so there exists $x\in (Q_n)_\ell(y)\cap (Q_n)_\ell(y^k)$, but this implies there exist distinct $e,f\in VQ_n$  such that $\wt{e}=\ell=\wt{f}$ and $(x+e)^K=y^K=(x+f)^K$.

Given $K\leq \Aut(Q_n)$, there is a \textit{natural map} $\pi:Q_n\to (Q_n)_K$ defined by $x\mapsto x^K$ for all $x\in VQ_n$. The following result implies Theorem \ref{class. dist.} with $\ell=1$.

\begin{lemma}
\label{covering}
Let $K\leq \Aut(Q_n)$  and $\Pi:=(Q_n)_K$.  The following are equivalent.
\begin{itemize}
\item[(i)] The natural map $\pi:Q_n\to \Pi$  is a covering.
\item[(ii)] $\Pi$ is  regular of valency $n$.
\item[(iii)] $d_K\geq 3$.
\end{itemize}
\end{lemma}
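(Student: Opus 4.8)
The plan is to prove the three equivalences in a cycle, say $(i)\Rightarrow(ii)\Rightarrow(iii)\Rightarrow(i)$, using the combinatorial tools already established. First I would observe that each implication is quite short given the machinery in hand. For $(i)\Rightarrow(ii)$: if $\pi$ is a covering, then by definition it restricts to a bijection $Q_n(x)\to\Pi(x^K)$ for every $x\in VQ_n$, and since $Q_n$ has valency $n$ at every vertex, every vertex of $\Pi$ has valency $n$; in particular $\Pi$ is regular of valency $n$.

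For $(ii)\Rightarrow(iii)$ I would argue contrapositively. Suppose $d_K\leq 2$. If $d_K\leq 1$ then $K$ is not semiregular, so some orbit $x^K$ has size less than $|K|$; then $\Pi=(Q_n)_K$ has fewer than $2^n/|K|$ vertices, and a counting argument (the natural map $\pi$ is surjective and each fibre has size at most $|K|$, so some fibre is smaller) shows the valency cannot be $n$ everywhere — more directly, if $d_K=1$ there is $x$ and $1\neq k\in K$ with $x^k=x+e_i$, so $x^K=(x+e_i)^K$ is a neighbour of itself that has been identified with $x$, forcing the valency at $x^K$ to drop below $n$. If $d_K=2$, then there exist $x\in VQ_n$ and $1\neq k\in K$ with $d_{Q_n}(x,x^k)=2$, say $x^k=x+e_{i,j}$. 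Then the two neighbours $x+e_i$ and $x^k+e_i=x+e_j$ of $x$ (in $Q_n$) satisfy $(x+e_i)^K$ and $(x+e_j)^K$; one checks that the edges out of $x^K$ collapse because $x+e_i$ and $x+e_j$ both map to vertices adjacent to $x^K$ but the $n$ coordinate-directions at $x$ no longer give $n$ distinct neighbours of $x^K$. Concretely, $x+e_i$ and $x+e_j$ have the same image $x^K$ shifted, and chasing this (using Lemma~\ref{nbd}) shows $|\Pi(x^K)|<n$. The cleanest route is: by Lemma~\ref{nbd}, $\Pi(x^K)\subseteq\{(x+e_i)^K:i\in[n]\}$, so $|\Pi(x^K)|\leq n$ always, with equality iff these $n$ images are pairwise distinct and all differ from $x^K$; if $d_K\leq 2$ I would exhibit a coincidence among them at some vertex $x$, giving valency strictly less than $n$ there and hence non-regularity (or regularity of smaller valency).

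For $(iii)\Rightarrow(i)$, assume $d_K\geq 3$ and let $x\in VQ_n$. I need $\pi$ to restrict to a bijection $Q_n(x)\to\Pi(x^K)$. Surjectivity is immediate since $\pi$ is a graph epimorphism. For injectivity, suppose $x+e_i$ and $x+e_j$ are neighbours of $x$ with $(x+e_i)^K=(x+e_j)^K$ and $i\neq j$; then by Lemma~\ref{trick}, $\wt{(x+e_i)+(x+e_j)}=\wt{e_{i,j}}=2\geq d_K$, contradicting $d_K\geq 3$. So the $n$ neighbours of $x$ in $Q_n$ map to $n$ distinct vertices of $\Pi$; it remains to see they all lie in $\Pi(x^K)$ (clear, since adjacency is preserved) and that none equals $x^K$ itself, i.e. $(x+e_i)^K\neq x^K$, which again follows from Lemma~\ref{trick} as $\wt{e_i}=1<3\leq d_K$. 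Hence $\pi|_{Q_n(x)}$ is a bijection onto $\Pi(x^K)$, so $\pi$ is a covering.

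The only mildly delicate point is the $(ii)\Rightarrow(iii)$ direction, specifically ruling out $d_K=2$: one must be careful that "regular of valency $n$" is exactly the borderline case, and that Lemma~\ref{nbd} gives the upper bound $|\Pi(x^K)|\leq n$ for free, so the whole content is producing, from a pair $x,k$ with $d_{Q_n}(x,x^k)\le 2$, a genuine drop in valency at an appropriate vertex. I expect this to be the main obstacle, but it is handled by the same identification argument as above (for $d_K=2$, the vertex $x$ with $x^k=x+e_{i,j}$ has $x+e_i$ mapping to a neighbour of $x^K$ that coincides with the image of a neighbour of $x^k$, and unwinding via $x^K=(x^k)^K$ shows the neighbourhood of $x^K$ has size at most $n-1$). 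Everything else is bookkeeping with Lemmas~\ref{nbd} and~\ref{trick}.
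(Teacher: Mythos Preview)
Your cycle and the implications (i)$\Rightarrow$(ii) and (iii)$\Rightarrow$(i) match the paper. The gap is in (ii)$\Rightarrow$(iii). You assert ``if $d_K\le 1$ then $K$ is not semiregular'', which is false: $d_K\ge 1$ is \emph{equivalent} to semiregularity, so your counting sketch does not apply when $d_K=1$ (and for $d_K=0$ it is wrong anyway --- a non-semiregular action yields \emph{more} orbits, not fewer, and in any case produces no vertex of small valency). Your ``more direct'' argument does dispose of $d_K=1$, but the case $d_K=0$ is left genuinely open. The paper resolves it by taking $x$ and $1\ne k=(y,\sigma)\in K$ with $x^k=x$, noting $\sigma\ne 1$, choosing $i$ moved by $\sigma$, and computing that $z:=x+e_i$ satisfies $z^k=z+e_{i,i^\sigma}$; this manufactures a distance-$2$ coincidence at a nearby vertex and reduces to the next case.

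For $d_K=2$ your target $|\Pi(x^K)|<n$ is in fact correct, but you never exhibit the required coincidence $(x+e_\ell)^K=(x+e_m)^K$; the sentences about ``$x+e_i$ and $x+e_j$ having the same image $x^K$ shifted'' do not supply one. The paper sidesteps the issue by looking one step over: among the $Q_n$-neighbours of $x+e_i$, both $x$ (take $\ell=i$) and $x+e_{i,j}$ (take $\ell=j$) occur, and these lie in the same $K$-orbit by hypothesis, so $|\Pi((x+e_i)^K)|<n$ immediately. This single observation also absorbs the $d_K=0$ case once the vertex $z$ above has been produced.
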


\begin{proof}
Clearly (i) implies (ii), and (iii) implies (i) by Lemmas \ref{trick} and \ref{nbd2}, so it remains to prove (ii) implies (iii).  Suppose that $d_K\leq 2$. If $x^K=(x+e_i)^K$ for some $x\in VQ_n$ and $i\in [n]$, then  $|\Pi(x^K)|<n$, and if $x^K=(x+e_{i,j})^K$ for some $x\in VQ_n$ and distinct $i,j\in [n]$, then $|\Pi((x+e_i)^K)|<n$. Thus we may assume that $d_K=0$, in which case there exist $x\in VQ_n$ and $1\neq k\in K$ such that $x^k=x$. Write $k=(y,\sigma)$. Clearly $\sigma\neq 1$, so $\sigma$ moves some $i\in [n]$. Let $z:=x+e_i$. Then $z^k=x+e_{i^\sigma}=z+e_{i,i^\sigma}$, so $|\Pi((z+e_i)^K)|<n$.
\end{proof}

Next we consider the parameters $a_i$ and $c_i$.

\begin{lemma}
\label{a c}
Let $K\leq \Aut(Q_n)$  and $\Pi:=(Q_n)_K$.  Let $\ell$ be a positive integer.
\begin{itemize}
\item[(i)] If $d_K\geq 2\ell$, then $a_{\ell-1}=0$.
\item[(ii)] If $d_K\geq 2\ell+1$, then $c_\ell=\ell$.
\end{itemize}
\end{lemma}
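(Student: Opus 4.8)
The plan is to work with the canonical representatives supplied by Lemma \ref{nbd2}. Fix $x \in VQ_n$ and write $\pi$ for the natural map. For part (i), suppose $d_K \geq 2\ell$ and suppose for contradiction that $a_{\ell-1} \neq 0$, i.e. there exist $u^K, v^K \in V\Pi$ with $d_\Pi(u^K, v^K) = \ell-1$ and $v^K \in \Pi_{\ell-1}(u^K)$ but with a vertex $w^K \in \Pi(v^K) \cap \Pi_{\ell-1}(u^K)$. Since $d_K \geq 2\ell > 2(\ell-1)$, Lemma \ref{nbd2} applies at level $\ell-1$: translating so that $u = x$, we may take $v = x + e_S$ and $w = x + e_T$ for subsets $S, T \subseteq [n]$ of size $\ell-1$, and the representatives are honest, meaning $\wt{v+w} = \wt{e_S + e_T}$ genuinely equals the distance-related quantity. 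Because $w^K \in \Pi(v^K)$, there is some $i \in [n]$ with $w^K = (v + e_i)^K$, so $(x + e_S + e_i)^K = (x + e_T)^K$; by Lemma \ref{trick}, either $e_S + e_i = e_T$ (as vectors) or $\wt{e_S + e_i + e_T} \geq d_K \geq 2\ell$. The latter is impossible since $\wt{e_S + e_i + e_T} \leq (\ell-1) + 1 + (\ell-1) = 2\ell-1$. So $e_S + e_i = e_T$; but then $\wt{e_S + e_T} = 1$, forcing $d_\Pi(v^K, w^K) = 1$ to be compatible only if $S$ and $T$ differ in exactly one place, which contradicts both having size $\ell-1$ unless... — here I need to be a touch more careful: $\wt{e_S + e_T} = 1$ with $|S| = |T| = \ell-1$ is impossible by parity (the symmetric difference of two equal-size sets has even size), so we have our contradiction, and hence $a_{\ell-1} = 0$.

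For part (ii), assume $d_K \geq 2\ell+1 \geq 2\ell$, so part (i) gives $a_{\ell-1} = 0$ and Lemma \ref{nbd2} (and Lemma \ref{covering}, since $d_K \geq 3$) give that $\Pi$ is regular of valency $n$ with clean representatives at level $\ell-1$ and level $\ell$. Fix $y^K \in \Pi_\ell(x^K)$; by Lemma \ref{nbd2} with base point $x$ we may write $y = x + e_S$ with $|S| = \ell$. We must show $|\Pi_{\ell-1}(x^K) \cap \Pi(y^K)| = \ell$. The neighbours of $y^K$ are exactly $(y + e_i)^K = (x + e_S + e_i)^K$ for $i \in [n]$, and by the covering property (Lemma \ref{covering}) these $n$ vertices are distinct. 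For $i \in S$, the vector $e_S + e_i = e_{S \setminus \{i\}}$ has weight $\ell-1$, and by Lemma \ref{nbd2} at level $\ell-1$ (valid since $d_K \geq 2(\ell-1)$) the vertex $(x + e_{S \setminus \{i\}})^K$ lies in $\Pi_{\ell-1}(x^K)$; these $\ell$ vertices are distinct since $\Pi_{\ell-1}(x^K) = \{(x+e)^K : \wt{e} = \ell-1\}$ with honest representatives. This shows $c_\ell \geq \ell$. Conversely, a neighbour $(x + e_S + e_i)^K$ with $i \notin S$ has $\wt{e_S + e_i} = \ell+1$; I claim such a vertex cannot lie in $\Pi_{\ell-1}(x^K)$. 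If it did, then by Lemma \ref{nbd} there is $e$ with $\wt{e} = \ell-1$ and $(x + e_S + e_i)^K = (x + e)^K$; since $\wt{e_S + e_i} = \ell+1 \neq \ell-1 = \wt{e}$ these are distinct vectors, so by Lemma \ref{trick} $\wt{e_S + e_i + e} \geq d_K \geq 2\ell+1$, yet $\wt{e_S + e_i + e} \leq (\ell+1) + (\ell-1) = 2\ell$, a contradiction. Hence exactly the $\ell$ neighbours indexed by $i \in S$ lie in $\Pi_{\ell-1}(x^K)$, giving $c_\ell = \ell$.

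I expect the main obstacle to be bookkeeping around \emph{which} level of Lemma \ref{nbd2} is legitimately available and ensuring the representatives $(x+e)^K$ are genuinely distinct (no collisions) so that counting the index sets $S$, $S \setminus \{i\}$, etc., actually counts vertices of $\Pi$; the parity argument in part (i) and the weight bound $\leq 2\ell < d_K$ in part (ii) are the crux and should be stated explicitly. Everything else is the routine translation between $VQ_n$-representatives and $V\Pi$-vertices already set up in Lemmas \ref{nbd}, \ref{trick}, and \ref{nbd2}.
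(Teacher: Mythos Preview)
Your proof is correct and follows essentially the same route as the paper's: choose representatives via Lemma~\ref{nbd} (or~\ref{nbd2}), then use Lemma~\ref{trick} to bound $\wt{\cdot}$ below by $d_K$ and a triangle-inequality count to bound it above, deriving a contradiction. In part~(i) you spell out the parity reason why $e_S+e_i=e_T$ is impossible, which the paper compresses into the unjustified remark ``Note that $z\neq y+e_i$''; in part~(ii) your case split on $i\in S$ versus $i\notin S$ is logically equivalent to the paper's argument that a generic $z^K$ in the intersection forces $i\in\{i_1,\dots,i_\ell\}$.
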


\begin{proof}
(i) Suppose that $d_K\geq 2\ell$, and let $x^K,y^K,z^K\in V\Pi$ be such that $y^K\in \Pi_{\ell-1}(x^K)$ and $z^K\in \Pi_{\ell-1}(x^K)\cap \Pi(y^K)$.
 By Lemma \ref{nbd}, we may assume that $\wt{x+y}=\wt{x+z}=\ell-1$ and $z^K=(y+e_i)^K$ for some $i\in [n]$. Note that $z\neq y+e_i$. Now $2(\ell-1)+1\geq  \wt{z+y+e_i}\geq d_K$ by Lemma \ref{trick}, a contradiction. Thus $a_{\ell-1}=0$.

(ii) Suppose that $d_K\geq 2\ell+1$. Let $x^K,y^K\in V\Pi$ be such that $y^K\in \Pi_{\ell}(x^K)$. By Lemma \ref{nbd}, we may assume that  $y=x+e_{i_1,\ldots,i_\ell}$ for some $ i_1<i_2<\cdots<i_\ell\in [n] $. By Lemmas  \ref{trick} and \ref{nbd2}, for $1\leq j\leq \ell$, the  vertices $(x+e_{i_1,\ldots,i_\ell}+e_{i_j})^K$  are  pairwise distinct and lie in $\Pi_{\ell-1}(x^K)\cap \Pi(y^K)$.
 Thus $|\Pi_{\ell-1}(x^K)\cap \Pi(y^K)|\geq \ell$. Let $z^K \in \Pi_{\ell-1}(x^K)\cap \Pi(y^K)$. By Lemma \ref{nbd}, we may assume that $\wt{x+z}=\ell-1$ and  $z^K=(x+e_{i_1,\ldots,i_\ell}+e_i)^K$ for some $i\in [n]$. If $z\neq x+e_{i_1,\ldots,i_\ell}+e_i$, then $2\ell\geq \wt{z+x+e_{i_1,\ldots,i_\ell}+e_i}\geq d_K$ by Lemma \ref{trick}, a contradiction. Thus $z=x+e_{i_1,\ldots,i_\ell}+e_i$, so $i\in \{i_1,\ldots,i_\ell\}$. Hence $c_\ell=\ell$.
\end{proof}

The following result is a simple counting exercise; we prove it here for completeness.

\begin{lemma}
\label{counting}
Let  $\Pi$ be a  regular graph of valency $n$, and let $\ell$ be a positive integer. If  $a_{i-1}=0$ and $c_i=i$ for $1\leq i\leq \ell$, then
$|\Pi_\ell(u)|=\tbinom{n}{\ell}$
for all $u\in V\Pi$.
\end{lemma}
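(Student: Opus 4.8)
The plan is to prove this by induction on $\ell$, counting vertices at distance $\ell$ via the edges joining $\Pi_{\ell-1}(u)$ to $\Pi_\ell(u)$ and using a double-counting argument on the set of such edges.

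First I would establish the base case $\ell=1$: since $\Pi$ is regular of valency $n$, we have $|\Pi_1(u)|=n=\binom{n}{1}$ immediately. Now suppose $\ell\geq 2$ and $|\Pi_{\ell-1}(u)|=\binom{n}{\ell-1}$ holds. Consider the bipartite graph of edges $\{v,w\}$ with $v\in\Pi_{\ell-1}(u)$ and $w\in\Pi_\ell(u)$, and count these edges in two ways. Counting from the $\Pi_\ell(u)$ side: each $w\in\Pi_\ell(u)$ has exactly $c_\ell=\ell$ neighbours in $\Pi_{\ell-1}(u)$ (by definition $c_\ell(u,w)=|\Pi_{\ell-1}(u)\cap\Pi(w)|$, and this equals $\ell$ by hypothesis), so the number of such edges is $\ell\cdot|\Pi_\ell(u)|$. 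Counting from the $\Pi_{\ell-1}(u)$ side: each $v\in\Pi_{\ell-1}(u)$ has $n$ neighbours total, of which $c_{\ell-1}=\ell-1$ lie in $\Pi_{\ell-2}(u)$, exactly $a_{\ell-1}=0$ lie in $\Pi_{\ell-1}(u)$, and hence $n-(\ell-1)-0=n-\ell+1$ lie in $\Pi_\ell(u)$ (here one uses that every neighbour of a vertex at distance $\ell-1$ from $u$ lies at distance $\ell-2$, $\ell-1$, or $\ell$). Therefore the number of such edges is also $(n-\ell+1)\cdot|\Pi_{\ell-1}(u)|$. Equating the two counts gives
$$\ell\cdot|\Pi_\ell(u)|=(n-\ell+1)\cdot|\Pi_{\ell-1}(u)|=(n-\ell+1)\binom{n}{\ell-1},$$
and since $(n-\ell+1)\binom{n}{\ell-1}=\ell\binom{n}{\ell}$, we conclude $|\Pi_\ell(u)|=\binom{n}{\ell}$, completing the induction.

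There is no real obstacle here; the only point requiring a little care is the case analysis for which distance classes a neighbour of a vertex at distance $\ell-1$ can fall into — namely that it must be at distance $\ell-2$, $\ell-1$, or $\ell$ — which is immediate from the triangle inequality for the graph metric, and the hypotheses $a_{i-1}=0$ and $c_i=i$ for all $i\leq\ell$ guarantee the first two counts are exactly $\ell-1$ and $0$ regardless of which vertex $v\in\Pi_{\ell-1}(u)$ is chosen. One should also note that the argument remains valid (with both sides zero) if $\Pi_\ell(u)$ is empty, and that when $\ell>n$ the formula correctly gives $\binom{n}{\ell}=0$, consistent with $n-\ell+1\leq 0$ forcing $\Pi_{\ell-1}(u)$ to have no neighbours outside $\Pi_{\ell-2}(u)\cup\Pi_{\ell-1}(u)$; but in fact one can simply restrict attention to $\ell\leq n$ since otherwise there is nothing to prove.
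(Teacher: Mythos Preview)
Your proof is correct and follows essentially the same approach as the paper: induction on $\ell$ via double-counting the edges between $\Pi_{\ell-1}(u)$ and $\Pi_\ell(u)$, using $a_{\ell-1}=0$ and $c_{\ell-1}=\ell-1$ to get $n-\ell+1$ such edges from each vertex of $\Pi_{\ell-1}(u)$, and $c_\ell=\ell$ from the other side. The paper treats the nonemptiness of the $\Pi_i(u)$ and the case $\ell>n$ a bit more explicitly at the outset, but the substance is identical.
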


\begin{proof}
Let $u\in V\Pi$. Let $m:=\min(\ell,n)$. Now $\Pi_i(u)$ is non-empty for $0\leq i\leq m$, for if not, then there exists $0\leq j<m$ such that $\Pi_{j}(u)\neq\varnothing$  and $\Pi_{j+1}(u)=\varnothing$, but $a_{j}=0$ and $c_{j}=j$, so a vertex in $\Pi_{j}(u)$ has $n-j$ neighbours in $\Pi_{j+1}(u)$,  a contradiction.
In particular, if $\ell>n$, then $\Pi_n(u)\neq \varnothing$ and $a_{n}=0$ and $c_{n}=n$, so  $\Pi_{n+1}(u)=\varnothing$ and $|\Pi_\ell(u)|=0=\tbinom{n}{\ell}$. 

Thus we may assume that $\ell\leq n$.   Now $\Pi_{i}(u)$ is non-empty for $1\leq i\leq \ell$.  Since $a_{\ell-1}=0$ and $c_{\ell-1}=\ell-1$, every vertex in $\Pi_{\ell-1}(u)$ has $n-\ell+1$ neighbours in $\Pi_\ell(u)$. Thus the number of edges between $\Pi_{\ell-1}(u)$ and $\Pi_\ell(u)$ is $(n-\ell+1)|\Pi_{\ell-1}(u)|$. On the other hand, since $c_\ell=\ell$, the number of edges between $\Pi_{\ell-1}(u)$ and $\Pi_\ell(u)$ is $\ell |\Pi_\ell(u)|$. By induction, $|\Pi_{\ell-1}(u)|=\tbinom{n}{\ell-1}$, so $|\Pi_\ell(u)|=\tbinom{n}{\ell}$.
\end{proof}

\begin{proof}[Proof of Theorem \ref{class. dist.}]
Let $\Pi:=(Q_n)_K$. Suppose that $\Pi$ is regular of valency $n$ with $a_{i-1}=0$ and $c_i=i$ for $1\leq i\leq \ell$. Then   
\begin{equation}
\label{eqn:size}
|\Pi_i(x^K)|=\tbinom{n}{i}\tag{$\ast$}
\end{equation}
 for all $x\in VQ_n$ and $1\leq i\leq \ell$  by Lemma \ref{counting}, so 
\begin{equation}\label{eqn:neigh}
\Pi_i(x^K)=\{(x+e)^K\in V\Pi:\wt{e}=i\}\tag{\dag}\end{equation}
for all $x\in VQ_n$ and  $1\leq i\leq \ell$ by Lemma \ref{nbd}. Moreover, $d_K\geq 3$ by Lemma \ref{covering}, so $K$ is semiregular and $\Pi$ has $2^n/|K|$ vertices. If $K=1$, then $d_K=\infty$, as desired, so we may assume that $K\neq 1$. In particular, $\ell<n$, or else $\Pi$ has $2^n$ vertices by (\ref{eqn:size}), in which case $K=1$.

Let $x\in VQ_n$ and $1\neq k\in K$. Note that $x\neq x^k$. Now $d_{Q_n}(x,x^k)> \ell$ by (\ref{eqn:neigh}), so we may write $d_{Q_n}(x,x^k)=j+\ell$ for some positive integer $j$. Suppose for a contradiction that $j\leq \ell$.
 There exists $y\in VQ_n$ such that $d_{Q_n}(x,y)=j$ and $d_{Q_n}(y,x^k)=\ell$, so $y^K\in \Pi_j(x^K)\cap \Pi_\ell(x^K)$. Thus $j=\ell$. 
 Let $x_1:=y+x$ and $x_2:=y+x^k$. Now $x_1$ and $x_2$ are distinct vectors of weight $\ell$, while $(y+x_1)^K=(y+x_2)^K$, so $| \{(y+e)^K\in V\Pi:\wt{e}=\ell\}|<\tbinom{n}{\ell}$, contradicting (\ref{eqn:size}) and (\ref{eqn:neigh}). Hence $d_K\geq 2\ell+1$.

Conversely, suppose that $d_K\geq 2\ell +1$. Then $\Pi$ is  regular of valency $n$ by Lemma \ref{covering}, and   $a_{i-1}=0$ and $c_i=i$ for $1\leq i\leq \ell$ by Lemma \ref{a c}. 
 \end{proof}

The assumption that $(Q_n)_K$ has valency  $n$ cannot be removed from the statement of Theorem \ref{class. dist.}, as the following example shows.

\begin{example}
For each positive $m< n$, there exists a subgroup $K$ of $\Aut(Q_n)$ with $d_K=2$ for which $(Q_n)_K$ is  regular of valency $m$ with $a_{i-1}=0$ and $c_i=i$ for all $i\geq 1$. 
Define $K$ to be the set of vectors in $E_n$ whose first $m-1$ coordinates are $0$. Let $\pi:Q_n\to(Q_n)_K$ be the natural map.
 Viewing $Q_m$ as the subgraph of $Q_n$ induced by  $\mathbb{F}_2^m\times 0^{n-m}$, the restriction $\pi:Q_m\to (Q_n)_K$ is a graph isomorphism, and $Q_m$ has the desired properties.
\end{example}

\begin{proof}[Proof of Corollary \ref{main rect}]
Let $\Pi$ be a rectagraph of valency $n$ with $a_2=0$ and $c_3=3$. By \cite[Lemma 3.1]{BamDevFawPra2015} (cf. \cite[Lemma 4.3.5 and Proposition 4.3.6]{BroCohNeu1989}), there exists a covering $\pi:Q_n\to \Pi$.   Let $K:=K^\pi=\{g\in \Aut(Q_n):g\pi=\pi\}$. Then $\Pi\simeq (Q_n)_K$ by \cite[Proposition 3.4]{BamDevFawPra2015},  and   $d_K\geq 7$ by Theorem \ref{class. dist.}. The converse also follows from Theorem \ref{class. dist.}.
\end{proof}

For $K\leq \Aut(Q_n)$, the normaliser $N_{\Aut(Q_n)}(K)$ acts naturally on   $V(Q_n)_K$ by $$(x^K)^g:=(x^g)^K$$ for all $x\in VQ_n$ and $g\in N_{\Aut(Q_n)}(K)$. For $d_K\geq 5$, in which case $(Q_n)_K$ is a rectagraph, 
every automorphism of  $(Q_n)_K$ arises in this way  by \cite[Proposition 3.4]{BamDevFawPra2015} (cf. \cite[Lemma 5]{Mat1991}).  We generalise this result (and its proof) in order to prove that isomorphisms of normal quotients arise only from conjugate groups, thereby establishing 
 Theorem \ref{conjuate simple}.

\begin{proposition}
\label{conjugate}
Let $K,L\leq \Aut(Q_n)$. Let $\pi_K:Q_n\to(Q_n)_K$ and $\pi_L:Q_n\to (Q_n)_L$ denote the natural maps.
\begin{itemize}
\item[(i)] If  $d_K\geq 5$ and $\varphi:(Q_n)_K\to (Q_n)_L$ is a graph isomorphism, then there exists $g\in \Aut(Q_n)$ such that $g^{-1}Kg=L$ and the diagram below commutes.
\item[(ii)] If $g\in \Aut(Q_n)$ and $g^{-1}Kg=L$, then there exists a graph isomorphism $\varphi:(Q_n)_K\to (Q_n)_L$ such that the diagram below commutes.
\end{itemize}
$$
\begin{CD}
Q_n @>g>> Q_n\\
@V\pi_K V V @VV\pi_L V\\
(Q_n)_K @>> \varphi > (Q_n)_L
\end{CD}
$$
\end{proposition}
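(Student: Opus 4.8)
The plan is to prove both directions by understanding how paths lift through coverings, using the fact (Lemma~\ref{covering}) that $d_K\geq 3$ makes $\pi_K$ a covering; note $d_K\geq 5$ is stronger than needed for that, but the extra slack is used to pin down the lift uniquely.

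\textbf{Part (ii), the easy direction.} Suppose $g\in\Aut(Q_n)$ with $g^{-1}Kg=L$. Then $g$ maps $K$-orbits to $L$-orbits: indeed $(x^K)^g=x^{Kg}=x^{gL}=(x^g)^L$, so the map $\varphi:x^K\mapsto (x^g)^L$ is well-defined and bijective on vertices. Since $g$ is a graph automorphism of $Q_n$, it sends edges to edges, and since passing to a quotient preserves adjacency (by definition of quotient graph), $\varphi$ is a graph homomorphism; the same argument applied to $g^{-1}$ gives the inverse, so $\varphi$ is an isomorphism. Commutativity of the square is exactly the identity $(x^g)^L=(x^K)^\varphi$, i.e. $\pi_L\circ g=\varphi\circ\pi_K$ read off vertexwise. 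This part is essentially formal.

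\textbf{Part (i), the substantive direction.} Here we are given an abstract graph isomorphism $\varphi:(Q_n)_K\to(Q_n)_L$ and must produce $g\in\Aut(Q_n)$ lifting it. Since $d_K\geq 5\geq 3$, by Lemma~\ref{covering} the natural map $\pi_K:Q_n\to(Q_n)_K$ is a covering; likewise one checks $d_L\geq 3$ (the composite map $\varphi\circ\pi_K$ is a covering of $(Q_n)_L$ by an $n$-regular graph, forcing $(Q_n)_L$ to be $n$-regular, hence $d_L\geq3$ by Lemma~\ref{covering} again), so $\pi_L$ is a covering too. The strategy is the standard covering-space lifting argument adapted to graphs: fix a base vertex $0\in VQ_n$ and a vertex $v\in VQ_n$ with $v^L=(0^K)^\varphi$; we want to build $g:Q_n\to Q_n$ with $0^g=v$ by lifting paths. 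Given any $x\in VQ_n$, choose a path from $0$ to $x$ in $Q_n$, push it down to a path from $0^K$ to $x^K$ in $(Q_n)_K$, apply $\varphi$ to get a path in $(Q_n)_L$ starting at $(0^K)^\varphi=v^L$, and lift it uniquely through the covering $\pi_L$ to a path in $Q_n$ starting at $v$; define $x^g$ to be the endpoint. The key obstacle, and the only place the hypothesis $d_K\geq 5$ really bites, is \emph{well-definedness}: one must show the endpoint does not depend on the chosen path from $0$ to $x$. Two such paths differ by a closed walk at $0$ in $Q_n$; it suffices to treat the generating closed walks, namely backtracks (killed automatically since $\varphi$ and the covering respect the quadrangle-free local structure) and the quadrangles $0\to e_i\to e_{i,j}\to e_j\to 0$ of $Q_n$. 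Pushing such a quadrangle down to $(Q_n)_K$ and across $\varphi$: because $d_K\geq 5$, Theorem~\ref{class. dist.} gives $a_1=0$ and $c_2=2$ in $(Q_n)_K$, so the image is again a genuine quadrangle (no degeneration to a triangle or edge), and a quadrangle in $(Q_n)_L$ lifts through $\pi_L$ to a quadrangle in $Q_n$, which is closed. Hence closed walks lift to closed walks, $g$ is well-defined, and by symmetry (running the argument with $\varphi^{-1}$) $g$ is a bijection on vertices; it preserves edges because it was constructed by lifting paths edge-by-edge, so $g\in\Aut(Q_n)$.

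\textbf{Finishing (i).} It remains to check $g^{-1}Kg=L$ and commutativity. Commutativity $\pi_L\circ g=\varphi\circ\pi_K$ holds by construction: for $x\in VQ_n$, $(x^g)^L$ is the endpoint of the lifted-to-$(Q_n)_L$ path, which is exactly $(x^K)^\varphi$. For the conjugacy relation: if $k\in K$ then for every $x$ we have $(x^k)^K=x^K$, so $((x^k)^g)^L=(x^K)^\varphi=(x^g)^L$, meaning $g^{-1}kg$ fixes every $L$-orbit setwise; since $g^{-1}kg\in\Aut(Q_n)$ fixes all fibres of $\pi_L$ and $\pi_L$ is a covering (so $g^{-1}kg$ restricted to any fibre, together with its action near that fibre, is determined), a short argument shows $g^{-1}kg\in L$. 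Thus $g^{-1}Kg\leq L$, and the reverse inclusion follows by applying the same reasoning to $\varphi^{-1}$ and the lift $g^{-1}$, giving $g^{-1}Kg=L$.

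I expect the main obstacle to be the well-definedness step in Part~(i)---specifically, verifying carefully that quadrangles of $Q_n$ map to quadrangles (not degenerate configurations) in $(Q_n)_K$ and in $(Q_n)_L$, which is where the bound $d_K\geq 5$ is exactly what is needed via Theorem~\ref{class. dist.}; the rest is bookkeeping with the covering-path-lifting formalism.
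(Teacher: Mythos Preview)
Your Part~(ii) is the same as the paper's. Your Part~(i) is correct in outline but takes a genuinely different route. The paper does not build $g$ by explicit path-lifting; instead it sets $\theta:=\pi_L\varphi^{-1}$, invokes \cite[Lemma~3.1]{BamDevFawPra2015} to produce a covering $g:Q_n\to Q_n$ matching $\theta^{-1}\pi_K$ on $\{0\}\cup Q_n(0)$, and then uses the rigidity of coverings of rectagraphs \cite[Lemma~3.2]{BamDevFawPra2015} (two coverings agreeing on a vertex and its neighbours agree everywhere) to conclude $\pi_K=g\theta$. Your approach unpacks this rigidity by hand via the path-lifting formalism and the fact that closed walks in $Q_n$ are generated by backtracks and $4$-cycles; the paper's approach is shorter because it outsources exactly that computation to the cited lemmas. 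For the conjugacy step, the paper shows $g^{-1}Kg\leq L$ via $K=K^{\pi_K}$, $L=L^{\pi_L}$ \cite[Lemma~2.4]{BamDevFawPra2015} and then gets equality by the cardinality count $2^n/|K|=2^n/|L|$, which is cleaner than running the argument twice.

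There is one genuine gap you should repair. You establish only $d_L\geq 3$, but your key step ``a quadrangle in $(Q_n)_L$ lifts through $\pi_L$ to a quadrangle in $Q_n$, which is closed'' requires $d_L\geq 5$: the lift of a $4$-cycle is a walk of length~$4$ whose endpoints lie in the same $L$-orbit, and you need $d_L\geq 5$ to force those endpoints to coincide. This is easy to fix---since $(Q_n)_L\simeq(Q_n)_K$ is an $n$-valent rectagraph, Theorem~\ref{class. dist.} gives $d_L\geq 5$---but as written the argument is incomplete at precisely the point you flagged as the main obstacle. You should also state (or cite) the fact that the closed walks of $Q_n$ are generated modulo backtracks by its quadrangles; this is standard but is doing real work in your well-definedness argument.
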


\begin{proof}
(i) Suppose that $\varphi:(Q_n)_K\to (Q_n)_L$ is a graph isomorphism. 
By Theorem \ref{class. dist.}, $(Q_n)_K$ is a rectagraph of valency $n$, so  $(Q_n)_L$ is a rectagraph of valency $n$, in which case Theorem \ref{class. dist.} implies that $d_L\geq 5$. Hence $\pi_K$ and $\pi_L$  are coverings by  Lemma \ref{covering}.
Let $\theta:=\pi_L\varphi^{-1}$. Now $\theta: Q_n\to (Q_n)_K$ is a covering,  so there exists $y\in VQ_n$ such that $y\theta= 0^K$, and $\theta$ induces a bijection from $Q_n(y)$ onto $(Q_n)_K(0^K)=\{e_i^K:1\leq i\leq n\}$ by Lemma \ref{nbd2}.
 There exists a
 covering $g:Q_n\to Q_n$ for which $0^g=y$ and $e_i^g=e_i^K\theta^{-1}$ for all $ i\leq n$ by \cite[Lemma 3.1]{BamDevFawPra2015}. Now
 $\pi_K$ and $g\theta$ agree on $\{0\}\cup Q_n(0)$,  so $\pi_K=g\theta$ by \cite[Lemma 3.2]{BamDevFawPra2015}. Thus  $\pi_K \varphi=g\pi_L$, and the diagram commutes. Since $g$ is surjective, it must be injective, so $g\in \Aut(Q_n)$. By \cite[Lemma 2.4]{BamDevFawPra2015}, $K=K^{\pi_K}$ and $L=L^{\pi_L}$. Hence for $k\in K$,  $$ (g^{-1}kg)\pi_L=g^{-1}k\pi_K\varphi=g^{-1}\pi_K\varphi=g^{-1}g\pi_L=\pi_L,$$
 so $g^{-1}Kg\leq L$; since $2^n/|K|=2^n/|L|$, we conclude that $g^{-1}Kg=L$, as desired.
 
 (ii) Suppose that $L=g^{-1}Kg$ for some $g\in \Aut(Q_n)$. Define $\varphi: (Q_n)_K\to (Q_n)_L$  by $x^K\mapsto (x^g)^L$ for all $x\in VQ_n$. Clearly the diagram commutes, and  $\varphi$ is a graph isomorphism.
\end{proof}

We remark that Proposition \ref{conjugate}(i) cannot be improved to include $d_K\leq 4$. Observe that if $K, L\leq \Aut(Q_n)$ such that 
   $(Q_n)_K\simeq (Q_n)_L$ but $K$ and $L$ are not conjugate in $\Aut(Q_n)$, then $(d_K,d_L)\in \{0,1,2\}\times\{0,1,2\}$ or $ \{(3,3),(4,4)\}$  by Lemmas \ref{cycle},  \ref{covering} and \ref{a c}(i).  
   When $n=6$, examples of such $K$ and $L$ exist for each possible $(d_K,d_L)$ by \cite{Magma}.

Proposition \ref{conjugate} implies that rectagraphs arising from binary linear codes are fundamentally different to those arising from  subgroups of $\Aut(Q_n)$ not contained in $\mathbb{F}_2^n$, for if $C$ is a binary linear code in $\mathbb{F}_2^n$, then $g^{-1}Cg\leq \mathbb{F}_2^n$ for all $g\in \Aut(Q_n)$.

\begin{proof}[Proof of Theorem \ref{conjuate simple}]
This follows from Proposition \ref{conjugate} and \cite[Lemma 2.4]{BamDevFawPra2015}.
\end{proof}

We finish this section with some examples. Recall that if $d_K=\infty$, then $(Q_n)_K$ is the $n$-cube, and if $d_K$ is finite, then $d_K\leq n$. The groups with largest possible finite minimum distance are described in the following.

\begin{example}
\label{large}
Let $K\leq \Aut(Q_n)$ where $n\geq 4$. If $d_K=n-1$ or $n$, then $K=\{0,x\}$ where $x\in VQ_n$ and $|x|=n-1$ or $n$ respectively. In the latter case, $(Q_n)_K$ is the folded $n$-cube.  In both cases, $K$ is a binary linear code, so $(Q_n)_K$ is vertex-transitive.
\end{example}

Next we consider the minimum distance of subgroups of $\Aut(Q_n)$ of order $2$.

\begin{example}
\label{K2}
Let $K\leq \Aut(Q_n)$ where $|K|=2$. Let $(x,\sigma)$ be the involution in $K$. Write $x=(x_1,\ldots,x_n)$, and let $\fix(\sigma)$ be the set of fixed points of $\sigma$. Then $d_K=|\{i\in \fix(\sigma):x_i=1\} |$.
\end{example}

In Example \ref{large}, we saw that the groups $K\leq \Aut(Q_n)$ with largest possible minimum distance are binary linear codes, and so the corresponding graphs are vertex-transitive. 
Now we see that there exist $K\leq \Aut(Q_n)$ such that $d_K$ is large but $(Q_n)_K$ is not vertex-transitive.

\begin{example}
\label{not vt}
Suppose that  $n\geq 8$. Choose $x=(x_1,\ldots,x_n)\in \mathbb{F}_2^n$ with $\wt{x}=n-1$ or $n$, and let $\sigma:=(i\ j)$ where  $x_i=1=x_j$. Let $K:=\{0,(x,\sigma)\}$. Then $K\leq \Aut(Q_n)$ and $d_K=n-3$ or $n-2$ respectively. Let $N:=N_{\Aut(Q_n)}(K)=\{(y,\tau)\in \Aut(Q_n):y^\sigma=y, x^\tau=x, \sigma\tau=\tau\sigma\}$. Then $e_{i}^K\notin (0^K)^N$, so $(Q_n)_K$ is not vertex-transitive by Theorem \ref{conjuate simple}.
\end{example}

\section{Distance 2 graph of $(Q_n)_K$}
\label{s: halved}

It is well known  that for  a binary linear code $C$ with $d_C\geq 2$, the graph $(Q_n)_C$ is bipartite precisely when  $C\leq E_n$. Moreover, when $C$ is not even, the bipartite double $(Q_n)_C.2$ is isomorphic to $(Q_n)_{C\cap E_n}$, and for $d_C\geq 4$, every halved  graph of $(Q_n)_C.2$ is isomorphic to the distance $2$ graph of $(Q_n)_C$. These results generalise to   subgroups $K$ of $\Aut(Q_n)$ with $d_K\geq 2$. Note that if $K$ is such a group and $L:=K\cap (E_n: S_n)$, then $L$ is an even subgroup of $\Aut(Q_n)$ with   $d_L\geq 2$. 

\begin{lemma}
\label{even}
Let  $K\leq \Aut(Q_n)$  where $d_K\geq 2$, and let $\Pi:=(Q_n)_K$. 
\begin{itemize}
\item[(i)] 
 $\Pi$ is bipartite with parts  $\{x^K\in V\Pi:\wt{x}\equiv i \mod 2\}$ for $i=0,1$  if and only if $K$ is even.
 \item[(ii)]  If $K$ is not even, then $ \Pi.2\simeq (Q_n)_{L}$ where $L:=K\cap (E_n: S_n)$.
  \item[(iii)] If $K$ is not even  and $d_K\geq 4$, then every halved graph of $\Pi.2$ is isomorphic to $\Pi_2$.
\end{itemize}
\end{lemma}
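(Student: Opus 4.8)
The plan is to handle the three parts in turn, using throughout that $E_n\colon S_n$ is exactly the parity-preserving subgroup of $\Aut(Q_n)$: for $(y,\sigma)\in\Aut(Q_n)$ and $v\in\mathbb{F}_2^n$ one has $\wt{v^{(y,\sigma)}}=\wt{v^\sigma+y}\equiv\wt v+\wt y\pmod 2$, so $(y,\sigma)$ fixes weight parity if and only if $y\in E_n$. Thus $K$ is even if and only if the partition of $VQ_n$ into even- and odd-weight vectors is $K$-invariant. For (i), if $K$ is even this partition descends to $V\Pi$, and since every edge of $Q_n$ joins vectors of opposite parity, so does every edge of $\Pi=(Q_n)_K$; this is the asserted bipartition. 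If instead $K$ is not even, pick $(y,\sigma)\in K$ with $\wt y$ odd; then $0^K=y^K$, and the image under the natural map of a geodesic from $0$ to $y$ in $Q_n$ is a closed walk of odd length $\wt y$ in $\Pi$, so $\Pi$ contains an odd cycle and is not bipartite.

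For (ii), put $L:=K\cap(E_n\colon S_n)$; since $K$ is not even and $E_n\colon S_n$ has index $2$ in $\Aut(Q_n)$, the group $L$ has index $2$ in $K$, and $d_L\geq d_K\geq 2$ because $L\leq K$. As $d_K,d_L\geq 1$, both groups are semiregular, so $|V(Q_n)_L|=2^n/|L|=2^{n+1}/|K|=2|V\Pi|=|V(\Pi.2)|$. Fix $t\in K\setminus L$ (necessarily parity-switching) and define $\psi\colon(Q_n)_L\to\Pi.2$ by $x^L\mapsto(x^K,\ \wt x\bmod 2)$. This is well defined ($L$ is even) and surjective (one of $x,x^t\in x^K$ has any prescribed parity), hence a bijection. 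To see $\psi$ is an isomorphism I would prove $x^L\sim z^L\iff\psi(x^L)\sim\psi(z^L)$. For the forward implication: choose representatives $a\in x^L$, $b\in z^L$ with $\wt{a+b}=1$; then $a^K\neq b^K$ by Lemma \ref{trick} (using $d_K\geq 2$), so $x^K\sim z^K$ in $\Pi$, while $\wt a\not\equiv\wt b$, i.e.\ $\wt x\not\equiv\wt z\pmod 2$. For the converse: take $a\in x^K$, $b\in z^K$ with $\wt{a+b}=1$; writing $K=L\sqcup Lt$, the element of $K$ carrying $x$ to $a$ lies in $L$ or in $Lt$ according to whether $\wt a\equiv\wt x$ or not, and likewise for $b$, and the constraints $\wt{a+b}=1$ and $\wt x\not\equiv\wt z$ force both elements to lie in the same coset. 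In the $L$-case $a\in x^L$ and $b\in z^L$; in the $Lt$-case $a^{t^{-1}}\in x^L$ and $b^{t^{-1}}\in z^L$. Either way one obtains representatives of $x^L$ and $z^L$ at Hamming distance $1$ in distinct $L$-orbits (Lemma \ref{trick} again, now with the bound $2\leq d_L$), so $x^L\sim z^L$.

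For (iii), since $d_K\geq 4\geq 3$, Lemma \ref{covering} gives that $\Pi$ is regular of valency $n$, and Lemma \ref{a c}(i) with $\ell=2$ gives $a_1=0$, so $\Pi$ is triangle-free; moreover $\Pi$ is connected and, by part (i), not bipartite, so $\Pi_2$ is connected. Consequently $\Pi.2$ is connected and bipartite with parts $V\Pi\times\{0\}$ and $V\Pi\times\{1\}$, so its halved graphs are the two connected components of $(\Pi.2)_2$, one inside each part. For $u\neq v$ in $V\Pi$ one has $d_{\Pi.2}((u,0),(v,0))=2$ if and only if there is a common neighbour $(w,1)$, that is, a vertex $w$ with $u\sim_\Pi w\sim_\Pi v$; since $\Pi$ is triangle-free this happens if and only if $d_\Pi(u,v)=2$. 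Hence $(u,0)\mapsto u$ is a graph isomorphism from that component onto $\Pi_2$, and symmetrically for $V\Pi\times\{1\}$, so every halved graph of $\Pi.2$ is isomorphic to $\Pi_2$.

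The delicate step is the converse direction in (ii): an edge of $\Pi.2$ only supplies representatives $a\in x^K$, $b\in z^K$ at distance $1$ in $Q_n$, and these need not lie in the smaller orbits $x^L$, $z^L$. The remedy is the fixed parity-switching element $t\in K\setminus L$: as $t^{\pm1}$ is a graph automorphism of $Q_n$, applying it corrects the parity without disturbing adjacency or $L$-orbits, and the parity bookkeeping — precisely where the hypothesis $\wt x\not\equiv\wt z$ enters — guarantees this can be carried out consistently for $a$ and $b$ simultaneously. One must also keep in mind that $d_L$ is only known to be at least $2$, not $3$, so collapsed edges have to be excluded via Lemma \ref{trick} and the bound $2\leq d_K$ rather than by appealing to Lemma \ref{nbd2}.
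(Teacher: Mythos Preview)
Your proof is correct and, for parts (ii) and (iii), follows the same route as the paper: the same map $x^L\mapsto(x^K,\wt{x}\bmod 2)$ in (ii) (the paper simply declares the verification ``routine'' where you spell out the coset bookkeeping with $t$), and the same bijection $(x^K,i)\mapsto x^K$ in (iii), using triangle-freeness of $\Pi$ from $a_1=0$.

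The one genuine difference is in the non-bipartite direction of (i). You pick any $(y,\sigma)\in K$ with $\wt{y}$ odd, note $0^{(y,\sigma)}=y$, and observe that the image of a geodesic from $0$ to $y$ is a closed walk of odd length in $\Pi$ (consecutive images are distinct since $d_K\geq 2$), which already forces $\Pi$ to be non-bipartite. The paper instead minimises $d_{Q_n}(x,x^k)$ over $x\in VQ_n$ and $k\in K\setminus(E_n{:}S_n)$ and argues by contradiction that the resulting path has no internal repetitions, so its image is an honest odd \emph{cycle}. Your argument is shorter and perfectly adequate for the stated lemma; the paper's buys an explicit odd cycle of specified length but is not needed here.

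One small remark: your closing comment that one cannot appeal to Lemma~\ref{nbd2} because $d_L$ might equal $2$ is unnecessary---Lemma~\ref{nbd2} with $\ell=1$ only requires $d_K\geq 2$, so it would apply. Your use of Lemma~\ref{trick} directly is of course fine.
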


\begin{proof}
(i) Let $B_i:=\{x^K\in V\Pi:\wt{x}\equiv i \mod 2\}$ for $i=0,1$. Suppose that $K$ is even. If $x\in VQ_n$ and $k\in K$, then $\wt{x}\equiv \wt{x^k}\mod 2$, so $B_0$ and $B_1$ partition $V\Pi$. If $x^K$ and $y^K$ are adjacent and $x^K\in B_0$, then  $y^K\in B_1$ by Lemma \ref{nbd}. Thus $\Pi$ is bipartite with parts $B_0$ and $B_1$.

Now suppose that $K$ is not even.    Let $r:=\min\{d_{Q_n}(x,x^k): x\in VQ_n,\ k\in K\setminus (E_n: S_n)\}$. Choose $x\in VQ_n$ and $k\in K\setminus (E_n: S_n)$ such that $d_{Q_n}(x,x^k)=r$. There exists a path $(x_0,x_1,\ldots,x_r)$ in $Q_n$ such that $x_0=x$ and $x_r=x^k$.  If $x_i^K=x_j^K$ for some $0\leq i<j<r$, then $x_i^\ell=x_j$ for some $\ell\in K$, so $d(x_i,x_i^\ell)=j-i<r$. Thus $\ell\in E_n: S_n$.  Since $d_{Q_n}(x,x_i)=i$ and $d_{Q_n}(x_i,x^{k\ell^{-1}})=d_{Q_n}(x_i^\ell, x^k)=r-j$, it follows that $d_{Q_n}(x,x^{k\ell^{-1}})\leq r+i-j<r$. But $k\ell^{-1}\in K\setminus (E_n: S_n)$, a contradiction.  Since $r$ is odd and $r\geq d_K\geq 2$, it follows that $(x_0^K,\ldots,x_{r}^K)$ is an odd cycle, so $\Pi$ is not bipartite.

(ii) Define a map $\varphi:(Q_n)_L\to \Pi.2$ by $x^L\mapsto (x^K,\wt{x}\mod 2)$ for all $x\in VQ_n$.  Using Lemma \ref{nbd2}, it is routine to verify that $\varphi$ is a graph isomorphism.

(iii) A halved graph $\Gamma$ of $\Pi.2$ has vertex set $\{(x^K,i):x\in VQ_n\}$ for some $i\in \mathbb{F}_2$, so there is a bijection $\varphi:\Gamma\to \Pi_2$ defined by $(x^K,i)\mapsto x^K$ for all $x\in VQ_n$.  Since $\Pi$ has no triangles by Lemma \ref{a c}, $(x^K,i)$ is adjacent to $(y^K,i)$ in $\Gamma$ if and only if $d_\Pi(x^K,y^K)=2$, so $\varphi$ is a graph isomorphism. 
\end{proof}

It is also well known that for a binary linear code $C$ with $d_C\geq 2$, the halved graphs of $(Q_n)_C$ are isomorphic. However, as we will see shortly, this is not always true for arbitrary subgroups $K$ of $\Aut(Q_n)$ with $d_K\geq 2$. By Lemma \ref{even}(ii) and (iii),  it is true when $K=M\cap (E_n:S_n)$ for some  non-even  subgroup $M$ of $\Aut(Q_n)$ with $d_M\geq 4$. Of course, such a group $M$ normalises $K$, and we can generalise this observation  as follows.

\begin{proposition}
\label{halved}
Let  $K\leq \Aut(Q_n)$ be even  where $d_K\geq 2$. If   $N_{\Aut(Q_n)}(K)$ is not even,   then the halved graphs of $(Q_n)_K$ are isomorphic. 
\end{proposition}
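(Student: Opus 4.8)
The plan is to exhibit an explicit isomorphism between the two halved graphs of $\Pi:=(Q_n)_K$ using an element of $N_{\Aut(Q_n)}(K)$ that is not even. By Lemma \ref{even}(i), since $K$ is even and $d_K\geq 2$, the graph $\Pi$ is bipartite with parts $B_0=\{x^K:\wt{x}\text{ even}\}$ and $B_1=\{x^K:\wt{x}\text{ odd}\}$, and the halved graphs $\Pi_2^{(0)}$ and $\Pi_2^{(1)}$ are the subgraphs of $\Pi_2$ induced on $B_0$ and $B_1$. First I would pick $g\in N_{\Aut(Q_n)}(K)\setminus(E_n:S_n)$; writing $g=(y,\sigma)$, the condition that $g$ is not even means $\wt{y}+(n-|\fix(\sigma)|)/1$ is odd — more precisely, $g$ swaps the two parts of the bipartition of $Q_n$, i.e. $\wt{x^g}\not\equiv\wt{x}\pmod 2$ for all $x$. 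This is the key feature: $g$ maps $B_0$ to $B_1$ and vice versa.

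The second step is to use the natural action of the normaliser on $V\Pi$ described just before Proposition \ref{conjugate}: since $g$ normalises $K$, the map $x^K\mapsto(x^g)^K$ is a well-defined automorphism of $\Pi$. Because $g$ reverses the bipartition of $Q_n$, this automorphism of $\Pi$ interchanges $B_0$ and $B_1$. An automorphism of $\Pi$ preserves distances, hence preserves $\Pi_2$, so its restriction gives a graph isomorphism from $[B_0]$ (as an induced subgraph of $\Pi_2$) onto $[B_1]$. By the description of halved graphs in \S\ref{s: prelim} (valid since $\Pi$ is connected — $Q_n$ is connected and quotients of connected graphs are connected — and bipartite), these induced subgraphs are exactly the two halved graphs of $\Pi$, so we are done.

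The main thing to check carefully is that $g\notin E_n:S_n$ really does force $\wt{x^g}\not\equiv\wt{x}\pmod 2$ for every $x\in VQ_n$, equivalently that $g$ does not preserve the bipartition $(E_n,\mathbb{F}_2^n\setminus E_n)$ of $Q_n$. Concretely, for $g=(y,\sigma)$ one has $x^g=x^\sigma+y$ (or $x\sigma^{-1}+y$ depending on the action convention), and $\wt{x^\sigma}=\wt{x}$, so $\wt{x^g}\equiv\wt{x}+\wt{y}\pmod 2$; thus $g$ preserves the bipartition iff $\wt{y}$ is even iff $y\in E_n$ iff $g\in E_n:S_n$. Hence $g\notin E_n:S_n$ gives $\wt{x^g}\equiv\wt{x}+1\pmod 2$ for all $x$, which is precisely what is needed. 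I would also remark that one should invoke Lemma \ref{a c}(i) only if needed to know $\Pi_2$ restricted to a part is connected, but in fact connectedness of each halved graph is automatic from the standard fact cited in \S\ref{s: prelim}, so no extra argument is required there. The only genuine subtlety — and it is minor — is the bookkeeping with the (right) action convention of $\mathbb{F}_2^n:S_n$ on $VQ_n$, but this is routine.
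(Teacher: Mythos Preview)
Your argument is correct and is essentially the paper's own proof: pick $g\in N_{\Aut(Q_n)}(K)\setminus(E_n{:}S_n)$, use Proposition~\ref{conjugate}(ii) (equivalently, the natural action of the normaliser) to view $g$ as an automorphism of $\Pi$, and invoke Lemma~\ref{even}(i) to see that this automorphism swaps the two parts and hence the two halved graphs. The garbled aside about ``$\wt{y}+(n-|\fix(\sigma)|)/1$'' should simply be deleted; your later parity computation $\wt{x^g}\equiv\wt{x}+\wt{y}\pmod 2$ is the correct justification.
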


\begin{proof}
 Let $\Gamma$ and $\Sigma$ be the halved graphs of $(Q_n)_K$, and let $g\in N_{\Aut(Q_n)}(K)$ be  such that $g\notin E_n: S_n$. By Proposition \ref{conjugate}(ii), the image of $g$ in $N_{\Aut(Q_n)}(K)/K$ is an automorphism of $(Q_n)_K$, and  $V\Gamma^g=V\Sigma$ by Lemma \ref{even}(i), so $\Gamma$ and $\Sigma$ are isomorphic.
\end{proof}

Translating by the vector $(1,\ldots,1)$ is central in $\Aut(Q_n)$, so for odd $n$, the group  $N_{\Aut(Q_n)}(K)$ is not even, and we obtain the following immediate consequence of Proposition \ref{halved}.

\begin{corollary}
\label{odd iso}
Let  $K\leq \Aut(Q_n)$ be even where $d_K\geq 2$. If $n$ is odd, then the halved graphs of $(Q_n)_K$ are isomorphic.
\end{corollary}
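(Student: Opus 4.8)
The plan is to obtain this as an immediate consequence of Proposition \ref{halved}. Since $K$ is even and $d_K\geq 2$, Lemma \ref{even}(i) shows that $(Q_n)_K$ is bipartite, so its two halved graphs are well defined; by Proposition \ref{halved} it therefore suffices to prove that $N_{\Aut(Q_n)}(K)$ is not even, i.e. that $N_{\Aut(Q_n)}(K)\not\leq E_n:S_n$.

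First I would exhibit an explicit non-even element of $N_{\Aut(Q_n)}(K)$, namely the translation $t=((1,\ldots,1),1)$ by the all-ones vector. As remarked just above the statement, $t$ is central in $\Aut(Q_n)=\mathbb{F}_2^n:S_n$: it commutes with every translation because $\mathbb{F}_2^n$ is abelian, and it commutes with every $\sigma\in S_n$ because $(1,\ldots,1)^\sigma=(1,\ldots,1)$. In particular $t\in N_{\Aut(Q_n)}(K)$ for every $K\leq\Aut(Q_n)$. When $n$ is odd the vector $(1,\ldots,1)$ has odd weight, so $t\notin E_n:S_n$, and hence $N_{\Aut(Q_n)}(K)$ is not even. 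Proposition \ref{halved} now yields the claim; concretely, the automorphism of $(Q_n)_K$ induced by $t$ via Proposition \ref{conjugate}(ii) swaps the two parts of $(Q_n)_K$ by Lemma \ref{even}(i), hence interchanges the two halved graphs.

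I do not anticipate any real obstacle here: the substance of the argument is already contained in Proposition \ref{halved}, and all that remains is the elementary observation that the central translation by $(1,\ldots,1)$ serves as the required witness to the non-evenness of the normaliser precisely when $n$ is odd.
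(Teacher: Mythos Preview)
Your proposal is correct and matches the paper's own argument essentially verbatim: the paper also observes that translation by $(1,\ldots,1)$ is central in $\Aut(Q_n)$, hence lies in $N_{\Aut(Q_n)}(K)$, and for odd $n$ this element is not even, so Proposition~\ref{halved} applies.
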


Next we give an example of a normal quotient whose halved graphs are not isomorphic. 

\begin{example}
\label{exp:halved}
Let $x:=e_{1,2,3,4}$ and $y:=e_{1,3,6,8}$. Let $\sigma:=(15)(26)(37)(48)$ and $\tau:=(12)(34)(56)(78)$. Let $K:=\langle (x,\sigma),(y,\tau)\rangle\leq \Aut(Q_8)$. Note that $K$ is isomorphic to the quaternion group. Now 
$K$ is even and $d_K=4$, so   $\Pi:=(Q_n)_K$ is bipartite.  But $|\Pi_2(0^K)|=13$ while $|\Pi_2(e_1^K)|=14$,  
so the halved graphs of $\Pi$ are not isomorphic by Lemma \ref{even}(i).
\end{example}

If $K$ is an even subgroup of $\Aut(Q_n)$  such that $d_K\geq 5$, then the halved graphs of $(Q_n)_K$ are regular of valency $\tbinom{n}{2}$ by Lemmas \ref{a c} and \ref{counting}, in which case the failure we observed in Example \ref{exp:halved} cannot occur. It would be interesting to determine in general when the halved graphs of a bipartite normal quotient are isomorphic, especially for those subgroups with minimum distance at least $5$. We can at least say the following.

\begin{example}
Let $K\leq \Aut(Q_n)$ where $K$ is even, $|K|=2$ and $d_K\geq 2$. Let $(x,\sigma)$ be the involution in $K$. Then $i^\sigma=i$ for some $i\in [n]$ by Example \ref{K2}, so $e_i\in N_{\Aut(Q_n)}(K)$. Thus the halved graphs of $(Q_n)_K$ are  isomorphic  by Proposition \ref{halved}.
\end{example}

 \section{Locally $T_n$ graphs}
 \label{s: proof}

In this section, we prove Theorems \ref{main even} and \ref{main aut}. First we give a generalisation of one direction of Theorem \ref{main even}.

\begin{lemma}
\label{loc Tn}
Let  $K\leq \Aut(Q_n)$ and $\Pi:=(Q_n)_K$ where $n\geq 2$.
 If $d_K\geq 7$, then any connected component of $\Pi_2$ is locally $T_n$.
\end{lemma}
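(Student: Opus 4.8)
The plan is to check the local condition directly at an arbitrary vertex, producing an explicit isomorphism with $T_n$. Fix a connected component $\Delta$ of $\Pi_2$ and a vertex $u^K$ of it; the neighbourhood of $u^K$ in $\Delta$ coincides with $\Pi_2(u^K)$, the set of vertices at distance $2$ from $u^K$ in $\Pi$, and the subgraph of $\Delta$ induced on this set is the same as the subgraph of $\Pi_2$ induced on it. Since $d_K\geq 7\geq 4$, Lemma \ref{nbd2} gives $\Pi_2(u^K)=\{(u+e)^K:\wt{e}=2\}$, so the natural candidate is the map $\phi$ sending a $2$-subset $\{i,j\}$ of $[n]$ to $(u+e_{i,j})^K$. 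I would first verify that $\phi$ is a bijection: it is onto by the identification just quoted, and it is injective by Lemma \ref{trick}, since $(u+e_{i,j})^K=(u+e_{k,l})^K$ with $\{i,j\}\neq\{k,l\}$ would force $d_K\leq\wt{e_{i,j}+e_{k,l}}\leq 4$, contradicting $d_K\geq 7$.

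It then remains to show that $(u+e_{i,j})^K$ and $(u+e_{k,l})^K$ are adjacent in $\Pi_2$ exactly when $|\{i,j\}\cap\{k,l\}|=1$, which is precisely the adjacency relation carried over from $T_n$ by $\phi$. If the two $2$-subsets meet in a point, then $f:=e_{i,j}+e_{k,l}$ has weight $2$ and $u+e_{k,l}=(u+e_{i,j})+f$, so applying Lemma \ref{nbd2} with base point $u+e_{i,j}$ places $(u+e_{k,l})^K$ in $\Pi_2((u+e_{i,j})^K)$; hence the two vertices are at distance $2$ in $\Pi$, as wanted. If instead the two $2$-subsets are disjoint, I argue by contradiction: adjacency would mean $(u+e_{k,l})^K\in\Pi_2((u+e_{i,j})^K)=\{(u+e_{i,j}+e)^K:\wt{e}=2\}$, so $(u+e_{k,l})^K=(u+e_{i,j}+e)^K$ for some $e$ of weight $2$. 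If $u+e_{k,l}=u+e_{i,j}+e$ then $e=e_{i,j}+e_{k,l}=e_{i,j,k,l}$, which has weight $4$; otherwise Lemma \ref{trick} gives $d_K\leq\wt{e_{k,l}+e_{i,j}+e}\leq 6$---both impossible under $d_K\geq 7$.

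The only place the hypothesis bites is this last, disjoint case: three vectors of weight $2$ can telescope down to weight $0$, but the triangle inequality caps their sum at $6$, exactly one short of the threshold $7$; this is why $d_K\geq 7$, rather than $d_K\geq 6$, is the correct condition, and it mirrors the classical statement about coset graphs of binary codes of minimum distance at least $7$. Everything else is routine bookkeeping with Lemmas \ref{nbd2} and \ref{trick}, so I do not expect to need Theorem \ref{class. dist.} or the rectagraph structure of $\Pi$ for this argument (one could instead deduce $a_2=0$ and $c_3=3$ and argue by a counting estimate, but the direct coset arithmetic seems shorter). Since $u^K$ and $\Delta$ were arbitrary, this shows every connected component of $\Pi_2$ is locally $T_n$.
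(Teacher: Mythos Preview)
Your proof is correct and follows essentially the same approach as the paper: define the map $\{i,j\}\mapsto (u+e_{i,j})^K$, use Lemma~\ref{nbd2} for surjectivity and Lemma~\ref{trick} for injectivity, then verify the adjacency condition by the same weight estimate $\wt{e_{i,j}+e_{k,l}+e}\leq 6<d_K$. Your case analysis in the disjoint case is, if anything, slightly more explicit than the paper's, but the argument is the same.
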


\begin{proof}
 Let $\Gamma$ be a connected component of $\Pi_2$, and let $x^K\in V\Gamma$. By Lemma \ref{nbd2}, there is a surjective map $\varphi:T_n\to [\Gamma(x^K)]$ defined by $\{i,j\}\mapsto (x+e_{i,j})^K$ for all $\{i,j\}\in VT_n$, and this map is injective by Lemma \ref{trick}. Let $\{i,j\}\in VT_n$.  By Lemma \ref{nbd2}, $$\Gamma((x+e_{i,j})^K)=\{(x+e_{i,j}+e_{\ell,m})^K\in V\Pi: \{\ell,m\}\in  VT_n\}.$$ 
 If $(x+e_{i',j'})^K=(x+e_{i,j}+e_{\ell,m})^K$ for some $\{i',j'\},\{m,n\}\in  VT_n$ where  $|\{i,j\}\cap \{\ell,m\}|=0$, then $6\geq \wt{e_{i',j'}+e_{i,j}+e_{\ell,m}}\geq d_K$ by Lemma \ref{trick}, a contradiction. Thus 
 $$\Gamma(x^K)\cap\Gamma((x+e_{i,j})^K)=\{(x+e_{\ell,m})^K\in V\Pi:\{\ell,m\}\in  VT_n,\ |\{i,j\}\cap \{\ell,m\}|=1 \},$$
 and we conclude that $\varphi$ is a graph isomorphism.
 \end{proof}

\begin{proof}[Proof of Theorem \ref{main even}]
Suppose that $\Gamma$ is connected and locally $T_n$. Then $\Gamma$ is a halved graph of some bipartite rectagraph $\Pi$ of valency $n$ with $c_3=3$ by \cite[Proposition 4.3.9]{BroCohNeu1989}, so we may apply Corollary \ref{main rect}. Also, $K$ is even by Lemma \ref{even}(i).

Conversely, suppose that $\Gamma$ is a halved graph of $(Q_n)_K$ for some  $K\leq \Aut(Q_n)$ where $K$ is even and  $d_K\geq 7$. Then $\Gamma$ is  locally $T_n$ by Lemma \ref{loc Tn}. The group $K$ is unique up to conjugacy in $\Aut(Q_n)$ by Theorem \ref{conjuate simple}.
\end{proof}

\begin{proof}[Proof of Theorem \ref{main aut}]
 Let  $\Pi:=(Q_n)_K$  and $N:=N_{\Aut(Q_n)}(K)$. Now $\Aut(\Pi)=N/K$ by Theorem  \ref{conjuate simple}.
 Since  $\Gamma$ is locally $T_n$ and $n\geq 5$,   $\Aut(\Gamma)$ is  the setwise stabiliser of $V\Gamma$ in $\Aut(\Pi)$ by \cite[Lemma 4.2]{BamDevFawPra2015}.  Recall that $V\Gamma=\{x^K\in V\Pi:\wt{x}\equiv i\mod 2\}$ where $i=0$ or $1$ by Lemma \ref{even}(i).  Thus $\Aut(\Gamma)=N_{E_n:S_n}(K)/K$.
\end{proof}

Hence the automorphism group of a connected locally $T_n$ graph is essentially known for  $n\geq 5$. This group can also be determined for $n\leq 4$. Suppose that $\Gamma$ is a connected locally $T_n$ graph  where $n\leq 4$. Then $\Gamma$ is locally  $K_1$ for $n=2$, $K_3$ for $n=3$, or $K_{3[2]}$ for $n=4$, so $\Gamma$ is  $K_2$, $K_4$ or $K_{4[2]}$ respectively (i.e., 
 $\Gamma$ is the halved $n$-cube). Hence $\Aut(\Gamma)$ is $S_2$, $S_4$ or $S_2\wr S_4$ respectively. Note that $\Aut(\Gamma)$ is only isomorphic to $N_{E_n: S_n}(K)/K$ when $n=3$.

We finish by observing that there are examples of locally $T_n$ graphs that are not vertex-transitive.

\begin{example}
\label{lt not vt}
 Let $K$ and $N$ be as in Example \ref{not vt} where $n$ is chosen so that  $(Q_n)_K$ is bipartite and $d_K\geq 7$. Let $\Gamma$ be the halved graph of $(Q_n)_K$ with vertex set $\{x^K\in V(Q_n)_K:\wt{x}\ \mbox{is even}\}$.  If 
$\ell\in [n]\setminus\{i,j\}$, then $e_{i,\ell}^K\notin (0^K)^N$, so $\Gamma$ is not vertex-transitive by Theorem \ref{main aut}.\end{example}

\bibliographystyle{acm}
\bibliography{jbf_references}

\end{document}